\newtheorem{theorem}{Theorem}[section]
\newtheorem{lemma}[theorem]{Lemma}
\newtheorem{proposition}[theorem]{Proposition}
\newtheorem{remark}[theorem]{Remark}
\newenvironment{proof}[1][Proof]{\noindent\textbf{#1.} }{\ \rule{0.5em}{0.5em}}
\begin{document}

\bigskip

\bigskip 
\begin{frontmatter}

\title{A $C^{\infty}$ rational quasi-interpolation operator for functions with jumps without the Gibbs phenomenon}

\author[address-CS]{Francesco Dell'Accio\corref{corrauthor}}
\cortext[corrauthor]{Corresponding author}
\ead{francesco.dellaccio@unical.it}

\author[address-CS]{Francesco Larosa}
\ead{francesco.larosa@unical.it}

\author[address-CS]{Federico Nudo}
\ead{federico.nudo@unical.it}

\author[address-MC]{Najoua Siar}
\ead{s.najoua@umi.ac.ma}

\address[address-CS]{Department of Mathematics and Computer Science, University of Calabria, Rende (CS), Italy}

\address[address-MC]{Department of Mathematics, University Moulay Ismail, Meknes, Morocco}

\begin{abstract}
The study of quasi-interpolation has gained significant importance in numerical analysis and approximation theory due to its versatile applications in scientific and engineering fields. This technique provides a flexible and efficient alternative to traditional interpolation methods by approximating data points without requiring the approximated function to pass exactly through them. This approach is particularly valuable for handling jump discontinuities, where classical interpolation methods often fail due to the Gibbs phenomenon. These discontinuities are common in practical scenarios such as signal processing and computational physics. In this paper, we present a $C^{\infty}$ rational quasi-interpolation operator designed to effectively approximate functions with jump discontinuities while minimizing the issues typically associated with traditional interpolation methods.
\end{abstract}

\begin{keyword}
Quasi-interpolation method\sep multinode Shepard functions\sep rational operators\sep equispaced nodes.
\end{keyword}

\end{frontmatter}

\section{Introduction}
 A fundamental problem in numerical analysis involves reconstructing an unknown continuous function $f$, defined on an interval $[a,b]$, given only its evaluations on a set of nodes. Classical interpolation and quasi-interpolation methods can be employed to address this problem. The classical interpolation method seeks an approximating function in a fixed vector space of finite dimension that passes precisely through the known data, by solving a linear system. In contrast, the quasi-interpolation method provides a more flexible approach by avoiding this strict constraint and instead requiring only polynomial reproduction up to a certain degree~\cite{Wang:2004:QIW,Buhmann:2022:QI}. The study of quasi-interpolation has become increasingly essential in numerical analysis and approximation theory due to its versatile use across various scientific and engineering applications~\cite{Ma:2009:ATT,Barrera:2013:ITA,Barrera:2019:QIB,Sun:2022:ACI,Barrera:2022:ANC,Ortmann:2024:HAQ,Arandiga:2024:EAW}. Introduced by Schoenberg in~\cite{Schoenberg:1946:CTTA,Schoenberg:1946:CTTB}, quasi-interpolation offers several advantages: it is
efficient and relatively easy to formulate for scattered and meshed nodes, and it is not necessary to solve any linear system of equations~\cite{Arandiga:2024:EAW,Arandiga:2024:NUW}.

Another critical challenge in numerical analysis is approximating an unknown function on an interval $[a,b]$ with jump discontinuities (or discontinuities of the first kind). These functions are common in various practical applications, such as signal processing, computational physics, and data analysis~\cite{Cockburn:1998:ENO}. Furthermore, these functions describe many real-world phenomena, such as shock waves in fluid dynamics and abrupt changes in financial markets. Accurately approximating these functions is essential for creating models that can effectively predict and simulate real-world behaviors. In this context, classical interpolation methods often struggle to effectively manage discontinuities, leading to the Gibbs phenomenon~\cite{Gibbs:1898:FSS}. This phenomenon is characterized by oscillations near discontinuities, which compromises the accuracy of the approximation. Conversely, the flexibility of quasi-interpolants is advantageous in mitigating this issue~\cite{Buhmann:2024:NMF,Arandiga:2024:EAW}. Recent contributions have further highlighted the effectiveness and versatility of quasi-interpolation techniques, especially in the context of low-degree spline constructions and non-standard differential settings; see for instance~\cite{guessab2011error, barrera2023lownew}. The method proposed in~\cite{Arandiga:2024:EAW} employs $C^1$-continuous cubic quasi-interpolation schemes in Bernstein-Bézier form to approximate functions with jump discontinuities. This approach not only enhances convergence but also mitigates the Gibbs phenomenon by applying a WENO (Weighted Essentially Non-Oscillatory) technique to the scheme's parameters. In~\cite{Amat:2021:OCS}, a new nonlinear technique specifically designed to be free of the Gibbs phenomenon has been developed. This technique introduces a nonlinear procedure that adjusts the behavior of the splines near discontinuities, at the cost of losing $C^2$ regularity. To overcome this issue, in~\cite{Amat:2022:ACO}, a new reconstruction has been introduced, by effectively removing the high-frequency oscillations while retaining the sharp features of the data, yielding a spline that is both smooth and accurate. The interested reader can refer to~\cite{Foucher:2009:QSQ,Gao:2014:AQI} and the references therein in order to obtain more information about the quasi-interpolation operator using spline functions, which are piecewise polynomials with a bounded differentiability class $C^k$, $k \in \mathbb{N}$.

In this paper, we introduce a new rational quasi-interpolant by employing the multinode Shepard functions~\cite{DellAccio:2018:ROF,DellAccio:2019:RCM}, which have been widely used in several applications, see~\cite{DellAccio:2021:NDS,DellAccio:2021:SPE,DellAccio:2023:OTI,Cavoretto:2024:NCS,DellAccio:2024:MSM}. 
More precisely, we subdivide the interval $[a,b]$ into several subintervals. For each subinterval, we compute the multinode Shepard function based on pairwise distinct points within that interval, as well as the local interpolation polynomial using the available data. The multinode Shepard functions are then used to blend the local interpolation polynomials, obtaining a global interpolant. This method offers significant advantages over spline functions. Notably, the multinode Shepard functions are $C^{\infty}$ functions, ensuring that the quasi-interpolant inherits this high level of differentiability. Moreover, since the multinode Shepard functions mimic the behavior of characteristic functions, this choice greatly provides an effective means of approximating functions with jump discontinuities. This approach significantly mitigates the issues typically associated with traditional interpolation methods.

The paper is organized as follows. In Section~\ref{sec2}, we introduce the rational quasi-interpolation operator by assuming to know the intervals where the discontinuities are located. In Section~\ref{sec3} we study the error bound of this operator on the subintervals where the function $f$ is continuous. In Section~\ref{sec4}, we present numerical experiments demonstrating the efficacy and accuracy of our method.

\section{A $C^{\infty}$ rational quasi-interpolation operator}
\label{sec2}
Let $f$ be an unknown function defined on an interval $[a,b]$ and suppose to know only its evaluations on a set of $n+1$ nodes
\begin{equation}
    X_n=\left\{x_0,\dots,x_n\right\}, \quad a=x_0<\dots<x_n=b.
    \label{Xn}
\end{equation}
We denote by 
\begin{equation*}
    S=\left\{s_{1},\dots,s_{m}\right\}
\end{equation*}
the set of discontinuity points of the function $f$ and we assume that the only information about them is that they occur somewhere between two consecutive nodes of the set $X_n$. More precisely, we assume that
\begin{equation*}   s_{\ell}\in\left(x_{\alpha_{\ell}},x_{\alpha_{\ell}+1}\right),
\end{equation*}
where $\alpha_{\ell}\in \mathbb{N}$, $\ell=1,\dots,m$ and $0<\alpha_{1}<\dots<\alpha_{m}<n-1$.
We set $x_{\alpha_0}=x_0$,  $x_{\alpha_{m+1}}=x_n$ and, by assuming that
\begin{equation}\label{alphapiu1}
    \alpha_{\ell}+1<\alpha_{\ell+1}, \quad \ell=1,\dots,m-1, 
\end{equation}
we consider the following intervals 
\begin{equation}\label{continuityints}
 I_0=\left[x_{\alpha_0},x_{\alpha_1}\right], \quad I_{\ell}=\left[x_{\alpha_{\ell}+1},x_{\alpha_{\ell+1}}\right], \quad j=1,\dots,m-1, \quad I_{m}=\left[x_{\alpha_m+1},x_{\alpha_{m+1}}\right].
 \end{equation}
We aim to define a rational quasi-interpolation operator by blending local polynomial interpolants of degree $d \in \mathbb{N}_0$ using multinode Shepard functions~\cite{DellAccio:2018:ROF}. To this aim, we denote the lengths of the intervals $I_{\ell}$ by 
\begin{equation*}
    h_{\ell}=\left\lvert I_{\ell} \right\rvert, \quad \ell=0,\dots,m, 
\end{equation*}
and we set
\begin{equation}\label{hmin}
    h^{\min}=\min_{\ell=0,\dots,m} h_{\ell}.
\end{equation}
By assumption~\eqref{alphapiu1} it results $h^{\min}>0$. Further, we set 

\begin{eqnarray}\label{hXnmax}
    && h_{X_n}^{\max} = \max_{\substack{0 \leq i \leq n-1 \\ s_\ell \notin \left(x_i, x_{i+1}\right) \, \forall \ell \in \{1, \dots, m\}}} \left\lvert x_i - x_{i+1} \right\rvert \\
     && h_{X_n}^{\min} = \min_{\substack{0 \leq i \leq n-1 \\ s_\ell \notin \left(x_i, x_{i+1}\right) \, \forall \ell \in \{1, \dots, m\}}} \left\lvert x_i - x_{i+1} \right\rvert\label{hXnmin}
\end{eqnarray}
and assume that
\begin{equation}\label{hi}
    h^{\min}\geq  h_{X_n}^{\max}.
\end{equation}
For any $r>0$ and $x\in [a,b]$ we set  
\begin{equation*}
    \mathcal{I}_{r,x}=\left\{I\subset \bigcup_{\ell=0}^m I_{\ell} \, :  \, \text{ $I$ is a closed interval, } \, \lvert I \rvert = r, \, x\in I \right\},
\end{equation*}
and for any $d \in \mathbb{N}_0$ we also set 
\begin{equation} \label{rd}
 r_0=\inf\left\{r\in\mathbb{R}_{+} \, :\, \left(r\leq h^{\min}\right) \wedge \left( \forall x \in  \bigcup_{\ell=0}^m I_{\ell} \quad \exists I \in \mathcal{I}_{r,x} \, s.t. \, \mathrm{card}\left(I\cap X_n\right)\geq 1 \right) \right\},
    \end{equation}
where $\mathrm{card}(\cdot)$ is the cardinality operator. Then we have 
\begin{theorem}\label{theorem:hxnmax}
Under the assumption~\eqref{hi} $r_0$ exists and $r_{0} \in \mathbb{R}_{+}$.
\end{theorem}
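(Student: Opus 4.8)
The plan is to show that the set over which the infimum in~\eqref{rd} is taken is a nonempty subset of $\mathbb{R}_+$ that is bounded away from $0$; its infimum is then a well-defined element of $\mathbb{R}_+$. Writing $\Omega=\bigcup_{\ell=0}^{m}I_\ell$, I will call a value $r>0$ \emph{admissible} when $r\le h^{\min}$ and, for every $x\in\Omega$, some $I\in\mathcal{I}_{r,x}$ satisfies $\mathrm{card}(I\cap X_n)\ge 1$. Two facts suffice: (i) $r=h_{X_n}^{\max}$ is admissible, so the admissible set is nonempty and bounded above by $h^{\min}$; and (ii) no $r<h_{X_n}^{\max}/2$ is admissible, so the infimum is strictly positive.

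For (i), assumption~\eqref{hi} is exactly what makes the construction work. Taking $r=h_{X_n}^{\max}$, the requirement $r\le h^{\min}$ is precisely~\eqref{hi}. To check the covering condition, fix $x\in\Omega$, say $x\in I_\ell=[p,q]$ with $p,q\in X_n$, and let $x_i$ be the largest node with $x_i\le x$. Because the node subinterval containing $x$ lies entirely inside the continuity interval $I_\ell$, it is free of discontinuity points, so by~\eqref{hXnmax} its length, and hence $x-x_i$, is at most $h_{X_n}^{\max}=r$. Put $y=\max(p,\,x-r)$ and $I=[y,y+r]$. Since $r\le h^{\min}\le h_\ell=q-p$, one has $y\ge p$ and $y+r=\max(p+r,x)\le q$, so $I\subseteq I_\ell\subseteq\Omega$ and $I\in\mathcal{I}_{r,x}$; moreover $x\in I$, while $y\le x_i\le x\le y+r$ shows $x_i\in I$. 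Thus $\mathrm{card}(I\cap X_n)\ge 1$ and $r=h_{X_n}^{\max}$ is admissible, whence $r_0$ exists and $r_0\le h_{X_n}^{\max}\le h^{\min}<\infty$.

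For (ii), choose a pair of consecutive nodes $x_i<x_{i+1}$ attaining the maximum in~\eqref{hXnmax}, so $x_{i+1}-x_i=h_{X_n}^{\max}$, and let $x^\star=\tfrac12(x_i+x_{i+1})\in\Omega$ be its midpoint. Any closed interval of length $r$ containing $x^\star$ extends at most $r$ to either side of $x^\star$; if $r<h_{X_n}^{\max}/2$, then neither neighbouring node is reached, since both lie at distance $h_{X_n}^{\max}/2$ from $x^\star$, and the open gap between them contains no node of $X_n$. Hence such an $r$ is not admissible, every admissible $r$ satisfies $r\ge h_{X_n}^{\max}/2$, and therefore $r_0\ge h_{X_n}^{\max}/2>0$, the strict positivity coming from the distinctness of the nodes in~\eqref{Xn}. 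Altogether $0<h_{X_n}^{\max}/2\le r_0\le h^{\min}$, which yields $r_0\in\mathbb{R}_+$.

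The delicate step is the verification of the covering condition for the witness $r=h_{X_n}^{\max}$ in~(i): the covering interval must at once contain $x$, capture a node, and avoid spilling out of the single continuity interval $I_\ell$ into a discontinuity gap. The uniform choice $y=\max(p,x-r)$ disposes of the boundary cases, but its validity relies entirely on the slack furnished by~\eqref{hi}; were $h^{\min}<h_{X_n}^{\max}$, some $I_\ell$ could be too short to host an interval of length $r$ reaching a node, and admissibility could break down.
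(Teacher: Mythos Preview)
Your proof is correct and, for the bare statement of the theorem, cleaner than the paper's. The difference in approach is this: the paper proves admissibility of $r=h_{X_n}^{\max}$ by building, for each continuity interval $I_\ell$, an explicit overlapping covering $U_{\ell,1},\dots,U_{\ell,n_\ell}$ by intervals of that length, each anchored at a node; the admissibility of $h_{X_n}^{\max}$ then follows because any $x\in I_\ell$ sits in some $U_{\ell,j}$. You bypass the covering entirely and, for each $x$ individually, exhibit the single interval $[\max(p,x-r),\,\max(p,x-r)+r]$ directly. For the lower bound you use the midpoint of the \emph{maximal} node gap to rule out $r<h_{X_n}^{\max}/2$, which is a sharper obstruction than the paper's $r<h_{X_n}^{\min}/2$ (based on the minimal gap), though either suffices for $r_0>0$.

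What the paper's longer route buys is that the constructive covering $\{U_{\ell,j}\}$ is not a throwaway device: it is precisely the family of sets on which the local interpolants $p_\iota$ and the multinode Shepard functions are later built (see the discussion after Theorem~\ref{theorem:hxnmax}, Theorem~2.2, and the definition of $Q_\mu$). So the paper is proving the theorem and setting up the operator in one stroke. Your argument establishes existence and positivity of $r_0$ more economically, but would still need the covering construction supplied separately before the quasi-interpolant can be defined.
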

\begin{proof}
Let $\ell\in\{0,\dots,m\}$ be such that $I_\ell= \left[a_\ell, b_\ell\right]$.
For each $x \in I_\ell \setminus \left\{b_{\ell}\right\}$, if $x \notin X_n$ we denote by $\delta_x=\left[x_{i},x_{i+1}\right]$ the unique interval bounded by two consecutive nodes that contains $x$; 
if instead $x \in X_n$, then $x=x_i$ for some $i$ and we set $\delta_x=\left[x_i,x_{i+1}\right]$. To prove the existence of $r_0>0$, we show that it is possible to construct a sequence of intervals $U_{\ell,1}, \dots, U_{\ell,n_{\ell}}$ such that, for any $j=1,\dots,n_{\ell}$,
 \begin{equation*}
       \left \lvert U_{\ell,j} \right \rvert = h^{\max}_{X_n}, \qquad \operatorname{card}\left(U_{\ell,j}\cap X_n\right)\geq 1, 
   \end{equation*}
   and
   \begin{equation*}
       \bigcup\limits_{j=1}^{n_{\ell}} U_{\ell,j} = I_{\ell}.
   \end{equation*}
From $h^{\min}\ge h^{\max}_{X_n}$ it results
        \begin{equation}
            a_\ell\in \left[a_\ell,a_\ell+h^{\max}_{X_n}\right]\subset I_\ell
        \end{equation}
        and we set 
        \begin{equation}
U_{\ell,1}=\left[a_{\ell,1},b_{\ell,1}\right]=\left[a_\ell,a_\ell+h^{\max}_{X_n}\right].
        \end{equation}
Then
   \begin{equation*}
        \left \lvert U_{\ell,1} \right \rvert =h^{\max}_{X_n}, \qquad  \operatorname{card}\left(U_{\ell,1}\cap X_n\right)\geq 1,  
   \end{equation*}
   so that, if $b_{\ell,1}=b_{\ell}$, we have got the sequence.
Otherwise, either $b_{\ell,1}+h^{\max}_{X_n}> b_\ell$ or $b_{\ell,1}+h^{\max}_{X_n}\leq b_\ell$.\\
 If $b_{\ell,1}+h^{\max}_{X_n}> b_\ell$, we set 
            \begin{equation*}
U_{\ell,2}=\left[a_{\ell,2},b_{\ell,2}\right]=\left[b_{\ell}-h^{\max}_{X_n},b_{\ell}\right]. 
            \end{equation*}
Then
 \begin{equation*}
   \left \lvert U_{\ell,2} \right \rvert =h^{\max}_{X_n}, \qquad   \operatorname{card}\left(U_{\ell,2}\cap X_n\right)\geq 1,
   \end{equation*}
and $U_{\ell,1} \cap U_{\ell,2} \neq \emptyset$ since $a_{\ell,2}=b_{\ell}-h^{\max}_{X_n} <  b_{\ell,1}<b_{\ell}=b_{\ell,2}$, and we have got the sequence.\\
If, instead, $b_{\ell,1}+h^{\max}_{X_n}\le b_\ell$, we set 
    \begin{equation*}
U_{\ell,2}=\left[a_{\ell,2},b_{\ell,2}\right]=
\left[\min \delta_{b_{\ell,1}},\min \delta_{b_{\ell,1}}+h^{\max}_{X_n}\right]. 
    \end{equation*}
Then 
   \begin{equation*}
   \left \lvert U_{\ell,2} \right \rvert =h^{\max}_{X_n}, \qquad   \operatorname{card}\left(U_{\ell,2}\cap X_n\right)\geq 1,
   \end{equation*}
   and
   \begin{equation*}
       U_{\ell,1}\cap U_{\ell,2} \neq \emptyset, \qquad \left\lvert U_{\ell,2}\setminus U_{\ell,1} \right\rvert > 0.
   \end{equation*}
If $b_{\ell,2}+h^{\max}_{X_n} \geq b_{\ell}$, we have got the sequence. Otherwise, by assuming that we have already got the sequence of intervals $U_{\ell,1}, \dots, U_{\ell,k-1}$, $k\geq 3$, such that, for any $j=1,\dots,k-1$, 
   \begin{equation*}
       \left \lvert U_{\ell,j} \right \rvert =h^{\max}_{X_n}, \qquad \operatorname{card}\left(U_{\ell,j}\cap X_n\right)\geq 1,  
   \end{equation*}
   and
   \begin{equation*}
       U_{\ell,j-1}\cap U_{\ell,j} \neq \emptyset, \qquad \left\lvert U_{\ell,j}\setminus U_{\ell,j-1} \right\rvert > 0,
   \end{equation*} 
for any $j=2,\dots,k-1$, the previous step shows that it is possible to construct another interval of the sequence, which will either allow us to obtain the sequence $U_{\ell,1},\dots,U_{\ell,n_{\ell}}$ or require another step. The process certainly ends since $I_{\ell} \cap X_n$ is finite. 
Since $h^{\max}_{X_n}$ satisfies the condition
    \begin{equation*}
\left(h^{\max}_{X_n}\leq h^{\min}\right) \wedge \left( \forall x \in  \bigcup_{\ell=0}^m I_{\ell} \quad \exists I \in \mathcal{I}_{r,x} \, s.t. \, \mathrm{card}\left(I\cap X_n\right)\geq 1 \right)
    \end{equation*}
    while any positive number $r<\frac{h^{\min}_{X_n}}{2}$ does not, and since $\ell$ was chosen arbitrarily, the theorem follows.
\end{proof}

The proof of Theorem~\ref{theorem:hxnmax} shows in constructive way the existence, for any $\ell=0,\dots,m$, of intervals $U_{\ell,1},U_{\ell,2}, \dots, U_{\ell,n_{\ell}}$ such that
    \begin{eqnarray*}
        &&\left\lvert U_{\ell,j}\right\rvert =h^{\max}_{h_{X_n}}, \quad \mathrm{card}\left(  U_{\ell,j} \cap X_n\right) \geq 1, \quad \forall j=1,\dots,n_{\ell}\\    
        && U_{\ell,j} \cap U_{\ell,j+1} \neq \emptyset, \quad \forall j=1,\dots,n_{\ell}-1
    \end{eqnarray*}
    and
    \begin{equation*}
        I_{\ell}=\bigcup_{j=1}^{n_{\ell}} U_{\ell,j}.
    \end{equation*}
Similar coverings are possible for any $r\geq r_0$, as stated in the following theorem.
\begin{theorem}
    For any $\ell=0,\dots,m$ there exist intervals $U_{\ell,1},U_{\ell,2}, \dots, U_{\ell,n_{\ell}}$ such that
    \begin{eqnarray*}
        &&\left\lvert U_{\ell,j}\right\rvert =r_0, \quad \mathrm{card}\left(  U_{\ell,j} \cap X_n\right) \geq 1, \quad \forall j=1,\dots,n_{\ell}\\    
        && U_{\ell,j} \cap U_{\ell,j+1} \neq \emptyset, \quad \forall j=1,\dots,n_{\ell}-1
    \end{eqnarray*}
    and
    \begin{equation*}
        I_{\ell}=\bigcup_{j=1}^{n_{\ell}} U_{\ell,j}.
    \end{equation*}
\end{theorem}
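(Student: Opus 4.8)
The plan is to fix one continuity interval $I_\ell=[a_\ell,b_\ell]$ (both endpoints are nodes, say $a_\ell=y_0<y_1<\dots<y_k=b_\ell$ with $\{y_0,\dots,y_k\}=I_\ell\cap X_n$) and to build the covering directly, which suffices since $\ell$ is arbitrary. The essential difference from Theorem~\ref{theorem:hxnmax} is that there the length $h^{\max}_{X_n}$ is at least every inter-node gap, so \emph{any} interval of that length automatically meets $X_n$; for the smaller value $r_0$ this is no longer true, and a length-$r_0$ interval placed inside a large gap may contain no node. Hence the intervals $U_{\ell,j}$ must be deliberately anchored at nodes, and the construction will only work once $r_0$ is pinned between suitable bounds.

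First I would locate $r_0$. The upper bound $r_0\le h^{\min}$ is immediate: Theorem~\ref{theorem:hxnmax} shows that $h^{\max}_{X_n}$ satisfies both clauses in the definition~\eqref{rd} of $r_0$, so it belongs to the set over which the infimum is taken, whence $r_0\le h^{\max}_{X_n}\le h^{\min}$ by~\eqref{hi}. For the lower bound I would use the midpoint $x^\star$ of a maximal admissible gap $(x_i,x_{i+1})$ of length $h^{\max}_{X_n}$: since this gap lies inside a continuity interval, its two nearest nodes $x_i,x_{i+1}$ are each at distance $h^{\max}_{X_n}/2$ from $x^\star$, so every closed interval containing $x^\star$ and a node has length at least $h^{\max}_{X_n}/2$. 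Thus every admissible $r$ satisfies $r\ge h^{\max}_{X_n}/2$, and therefore $r_0\ge h^{\max}_{X_n}/2$. Combining, $h^{\max}_{X_n}/2\le r_0\le h^{\min}$, and in particular every inter-node gap obeys $y_t-y_{t-1}\le h^{\max}_{X_n}\le 2r_0$.

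With this bound the covering is straightforward to assemble. For each $t$ the two node-anchored intervals $[y_{t-1},y_{t-1}+r_0]$ and $[y_t-r_0,y_t]$ both meet $X_n$, and because $y_t-y_{t-1}\le 2r_0$ their union contains $[y_{t-1},y_t]$ and they overlap; moreover the right-anchored interval of gap $t$ shares the node $y_t$ with the left-anchored interval of gap $t+1$, so consecutive pieces always intersect. Running $t$ from $1$ to $k$ therefore produces a family of length-$r_0$ intervals, each meeting $X_n$, whose union is all of $I_\ell$. The only thing to watch is containment in $I_\ell$: the extremal intervals $[a_\ell,a_\ell+r_0]$ and $[b_\ell-r_0,b_\ell]$ fit because $r_0\le h^{\min}\le |I_\ell|$, and any anchored interval that would protrude past an endpoint can be replaced by the corresponding extremal interval, which still contains the relevant node. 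Finally I would order all retained intervals by left endpoint and discard any one contained in the union of the others; the standard greedy argument for covering a connected interval then yields the ordered, consecutively overlapping sequence $U_{\ell,1},\dots,U_{\ell,n_\ell}$ with $\bigcup_j U_{\ell,j}=I_\ell$.

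The main obstacle is the lower estimate $r_0\ge h^{\max}_{X_n}/2$, i.e. recognising that the binding constraint in~\eqref{rd} is the midpoint of the largest admissible gap; once every gap is known to be at most $2r_0$, the rest is bookkeeping with the boundary cases. An alternative, less computational route would first prove that the infimum in~\eqref{rd} is \emph{attained} (using that $X_n$ is finite and each component of $\bigcup_\ell I_\ell$ is compact, so a convergent subsequence of admissible length-$r_j$ intervals, $r_j\downarrow r_0$, produces an admissible length-$r_0$ interval through any prescribed point); the defining property at $r=r_0$ then furnishes, for every $x\in I_\ell$, a length-$r_0$ interval containing $x$ and a node, and a finite subcover pruned by the same greedy argument gives the claim, with node-containment now automatic. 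Either way the construction extends verbatim to any $r\in[r_0,h^{\min}]$, matching the remark preceding the statement.
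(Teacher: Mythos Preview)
Your argument is correct and takes a genuinely different route from the paper. The paper proceeds by an inductive left-to-right construction: starting with $U_{\ell,1}=[a_\ell,a_\ell+r_0]$, it builds $U_{\ell,k}$ from $U_{\ell,k-1}$ by anchoring at the last node below $b_{\ell,k-1}$ when possible, and otherwise placing $U_{\ell,k}=[b_{\ell,k-1},b_{\ell,k-1}+r_0]$ and proving \emph{by contradiction} from the defining property of $r_0$ that this interval must contain a node. You instead first pin down $r_0$ quantitatively via the midpoint-of-the-maximal-gap argument, obtaining $r_0\ge h^{\max}_{X_n}/2$ (sharper than the bound $r_0\ge h^{\min}_{X_n}/2$ implicit in Theorem~\ref{theorem:hxnmax}), and then exploit the resulting fact that every inter-node gap has length $\le 2r_0$ to cover each gap $[y_{t-1},y_t]$ by the two node-anchored intervals $[y_{t-1},y_{t-1}+r_0]$ and $[y_t-r_0,y_t]$, followed by a routine greedy pruning. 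Your approach is more elementary in that it never invokes the admissibility of $r_0$ itself---the paper's contradiction step tacitly uses that the infimum in~\eqref{rd} is attained (it picks $I(x)\in\mathcal{I}_{r_0,x}$), which is true but not argued there; your main line needs only the lower bound on $r_0$, while your alternative route would supply exactly the attainment argument the paper omits. The paper's construction, on the other hand, is more economical in that it produces the final ordered overlapping sequence directly, without a separate cleanup pass.
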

\begin{proof}
As in the proof of the previous theorem, let $\ell\in\{0,\dots,m\}$ be such that $I_\ell= \left[a_\ell, b_\ell\right]$.
For each $x \in I_\ell \setminus \left\{b_{\ell}\right\}$, if $x \notin X_n$ we denote by $\delta_x=\left[x_{i},x_{i+1}\right]$ the unique interval bounded by two consecutive nodes that contains $x$; 
if instead $x \in X_n$, then $x=x_i$ for some $i$ and we set $\delta_x=\left[x_i,x_{i+1}\right]$. 
We set  
        \begin{equation}
U_{\ell,1}=\left[a_{\ell,1},b_{\ell,1}\right]=\left[a_\ell,a_\ell+r_0\right]
        \end{equation}
and we get $\operatorname{card}\left(U_{\ell,1}\cap X_n\right)\geq 1$. If $b_{\ell,1}=b_{\ell}$, we get the thesis. 
On the contrary, we assume that we have found a sequence of intervals $U_{\ell,1},\dots,U_{\ell,k-1}$, $k\geq 2$, such that, for any $j=1,\dots,k-1$, it results  
   \begin{equation*}
       \left \lvert U_{\ell,j} \right \rvert = r_0, \qquad  \operatorname{card}\left(U_{\ell,j}\cap X_n\right)\geq 1,  
   \end{equation*}
   and, in case of $k\geq3$, for any $j=2,\dots,k-1$ 
   \begin{equation*}
       U_{\ell,j-1}\cap U_{\ell,j} \neq \emptyset, \qquad \left\lvert U_{\ell,j}\setminus U_{\ell,j-1} \right\rvert > 0.
   \end{equation*}
Let assume that $b_{\ell,k-1}+r_0\le b_\ell$.
    If there exist $x_i \in U_{\ell,k-1}\cap X_n$ such that $x_i>a_{\ell,k-1}$, we set 
    \begin{equation*}
U_{\ell,k}=\left[a_{\ell,k},b_{\ell,k}\right]=
\left[\min \delta_{b_{\ell,k-1}},\min \delta_{b_{\ell,k-1}}+r_0\right] 
    \end{equation*}
    and we get $\mathrm{card}\left(U_{\ell,k}\cap X_n\right)\geq1$ and 
    \begin{equation*}
      U_{\ell,k-1}\cap U_{\ell,k} \neq \emptyset, \qquad \left\lvert U_{\ell,k}\setminus U_{\ell,k-1} \right\rvert > 0,  
    \end{equation*}
    since $\min \delta_{b_{\ell,k-1}}\geq x_i>a_{\ell,k-1}$.
    Otherwise $a_{\ell,k-1}$ is the only node in $U_{\ell,k-1}$ and we set 
    \begin{equation*} U_{\ell,k}=\left[a_{\ell,k},b_{\ell,k}\right]=
\left[b_{\ell,k-1},b_{\ell,k-1}+r_0\right]. 
    \end{equation*}
    We have to prove that $\mathrm{card}\left(U_{\ell,k}\cap X_n\right)\geq1$. By contradiction, let us assume that $U_{\ell,k}\cap X_n = \emptyset$. For any $x \in I_{\ell}$ let $I(x) \in \mathcal{I}_{r_0,x}$ such that $\mathrm{card}\left(I(x) \cap X_n\right) \geq 1$ and let  
    \begin{equation*}
        x_{i}=\min   \bigcup_{x \in \left(b_{\ell,k},b_{\ell}\right)} I(x) \cap X_n>b_{\ell,k}.  
    \end{equation*} 
 Then, for any $x \in \left(a_{\ell,k}, a_{\ell,k}+x_i-b_{\ell,k} \right)$ does not exist any $I \in \mathcal{I}_{r_0,x}$ such that $\operatorname{card}\left(I \cap X_n\right) \ge 1$, which contradicts the existence of $r_0$. Therefore $\mathrm{card}\left(U_{\ell,k}\cap X_n\right)\geq1$ and
 \begin{equation*}
     U_{\ell,k}\cap U_{\ell,k-1}\neq \emptyset, \qquad \left\lvert U_{\ell,k}\setminus U_{\ell,k-1}\right\rvert>0.
 \end{equation*}
If $b_{\ell,k-1}+r_0> b_\ell$, then we set 
            \begin{equation*}
U_{\ell,k}=\left[a_{\ell,k},b_{\ell,k}\right]=\left[b_{\ell}-r_0,b_{\ell}\right], 
            \end{equation*}
and $U_{\ell,k-1} \cap U_{\ell,k} \neq \emptyset$ since $a_{\ell,k}=b_{\ell}-r_0 \leq  b_{\ell,k-1}<b_{\ell}=b_{\ell,k}$. 
\end{proof}
\medskip

The numerical computation of the value $r_0$ is beyond the scope of this paper, since the computation of $h^{\max}_{X_n}$ is extremely simple and sufficient to produce a similar covering, possibly with more nodes in each interval $U_{\ell,j}$ compared to the one obtained using $r_0$. To ensure the existence of covering of $\bigcup\limits_{\ell=0}^m I_{\ell}$ by intervals $U_{\ell,j}$, each of which contains at least $d+1$ nodes, we introduce the following notations: 
\begin{eqnarray}
    && h_{X_n}^{\max,d} = \max_{\substack{0 \leq i \leq n-(d+1) \\ s_\ell \notin \left(x_i, x_{i+(d+1))}\right) \, \forall \ell \in \{1, \dots, m\}}} \left\lvert x_i - x_{i+(d+1)} \right\rvert \\
     && h_{X_n}^{\min,d} = \min_{\substack{0 \leq i \leq n-(d+1) \\ s_\ell \notin \left(x_i, x_{i+(d+1))}\right) \, \forall \ell \in \{1, \dots, m\}}} \left\lvert x_i - x_{i+(d+1)} \right\rvert 
     \end{eqnarray}
and 
\begin{equation}
    r_d=\inf\left\{r\in\mathbb{R}_{+} \, :\, \left(r\leq h^{\min}\right) \wedge \left( \forall x \in  \bigcup_{\ell=0}^m I_{\ell} \quad \exists I \in \mathcal{I}_{r,x} \, s.t. \, \mathrm{card}\left(I\cap X_n\right)\geq d+1 \right) \right\}.
\end{equation}

\begin{theorem}\label{theoremhdxnmax}
    Under the assumption 
    \begin{equation*}
        h^{\min} \geq h_{X_n}^{\max,d}
    \end{equation*}
    $r_d$ exists and $r_d \in \mathbb{R}_{+}$.
\end{theorem}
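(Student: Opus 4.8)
The plan is to adapt, almost verbatim, the constructive argument used in the proof of Theorem~\ref{theorem:hxnmax}, replacing the reference length $h_{X_n}^{\max}$ by $h_{X_n}^{\max,d}$ and the cardinality requirement $\operatorname{card}(\cdot)\ge 1$ by $\operatorname{card}(\cdot)\ge d+1$. Fix $\ell\in\{0,\dots,m\}$ and write $I_\ell=[a_\ell,b_\ell]$, recalling that $a_\ell,b_\ell\in X_n$ and that $|I_\ell|=h_\ell\ge h^{\min}\ge h_{X_n}^{\max,d}$ by hypothesis, so any interval of length $h_{X_n}^{\max,d}$ anchored at $a_\ell$ is contained in $I_\ell$. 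The first step is the feasibility lemma on which everything rests: an interval $[x_p,x_p+h_{X_n}^{\max,d}]\subseteq I_\ell$ anchored at a node $x_p$ contains at least $d+1$ nodes. Indeed, if the nodes $x_p,\dots,x_{p+(d+1)}$ all lie in $I_\ell$, then $p$ is an admissible index in the definition of $h_{X_n}^{\max,d}$ (no discontinuity separates $x_p$ from $x_{p+(d+1)}$), whence $x_{p+(d+1)}-x_p\le h_{X_n}^{\max,d}$ and the $d+2$ nodes $x_p,\dots,x_{p+(d+1)}$ all belong to the interval.

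With this lemma in hand I would build the covering exactly as before. Set $U_{\ell,1}=[a_\ell,a_\ell+h_{X_n}^{\max,d}]$, which by the lemma contains at least $d+1$ nodes. If $U_{\ell,1}$ already reaches $b_\ell$ the covering is complete; otherwise I advance by one node-step, using the intervals $\delta_x$ exactly as in the proof of Theorem~\ref{theorem:hxnmax}: if the remaining length is too short I clamp the final interval as $[b_\ell-h_{X_n}^{\max,d},b_\ell]$, and otherwise I set the next interval to $[\min\delta_{b_{\ell,j}},\min\delta_{b_{\ell,j}}+h_{X_n}^{\max,d}]$. Each newly produced interval has length $h_{X_n}^{\max,d}$, overlaps its predecessor with $|U_{\ell,j}\setminus U_{\ell,j-1}|>0$, and contains at least $d+1$ nodes; since $I_\ell\cap X_n$ is finite the process terminates, yielding intervals $U_{\ell,1},\dots,U_{\ell,n_\ell}$ with $\bigcup_{j}U_{\ell,j}=I_\ell$.

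Since $\ell$ was arbitrary, the length $r=h_{X_n}^{\max,d}$ satisfies both clauses defining $r_d$: one has $h_{X_n}^{\max,d}\le h^{\min}$ by hypothesis, and every $x\in\bigcup_{\ell}I_\ell$ lies in one of the covering intervals, which belongs to $\mathcal{I}_{r,x}$ and carries at least $d+1$ nodes. Hence the set in the definition of $r_d$ is nonempty and $r_d\le h_{X_n}^{\max,d}<\infty$. For strict positivity I would argue that no sufficiently small $r$ satisfies the condition: since any connected interval contained in $\bigcup_\ell I_\ell$ lies inside a single $I_\ell$, it can contain $d+1$ distinct nodes only if its length is at least the span of $d+1$ consecutive nodes, which is bounded below by $d\,h_{X_n}^{\min}>0$ when $d\ge 1$ (the case $d=0$ being precisely Theorem~\ref{theorem:hxnmax}, since $h_{X_n}^{\max,0}=h_{X_n}^{\max}$). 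Thus $r_d\ge d\,h_{X_n}^{\min}>0$, and $r_d\in\mathbb{R}_{+}$.

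The hard part will be the feasibility lemma in the boundary situations, namely verifying that the clamped interval $[b_\ell-h_{X_n}^{\max,d},b_\ell]$ and any anchored interval close to $b_\ell$ still contain $d+1$ nodes when fewer than $d+2$ consecutive nodes remain before $b_\ell$; there the clean count via $x_{p+(d+1)}-x_p\le h_{X_n}^{\max,d}$ may no longer apply. To handle this I would reproduce the contradiction argument from the proof of the preceding theorem: if such an interval contained fewer than $d+1$ nodes, then for points $x$ slightly to its left no admissible $I\in\mathcal{I}_{r_d,x}$ with $\operatorname{card}(I\cap X_n)\ge d+1$ could exist, contradicting the membership of $h_{X_n}^{\max,d}$ in the defining set. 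Making this reduction precise for an arbitrary node distribution is the only genuinely delicate point; the remainder is a transcription of the $d=0$ argument.
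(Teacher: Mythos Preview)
Your approach is the paper's: adapt the constructive covering of Theorem~\ref{theorem:hxnmax} with $h_{X_n}^{\max,d}$ in place of $h_{X_n}^{\max}$, resting on the single fact that any interval of length $h_{X_n}^{\max,d}$ lying in some $I_\ell$ with one endpoint in $X_n$ already contains at least $d+1$ nodes; the paper's proof is literally a two-sentence sketch stating this fact and pointing back to Theorem~\ref{theorem:hxnmax}. Your closing worry about the clamped interval $[b_\ell-h_{X_n}^{\max,d},b_\ell]$ is handled in the paper by stating the key fact for \emph{either} endpoint at a node (and $b_\ell\in X_n$), so the right-anchored case is just the mirror of your left-anchored lemma; note, however, that the contradiction device you propose for it would be circular here, since it presupposes that $h_{X_n}^{\max,d}$ already belongs to the defining set---exactly what you are in the process of establishing.
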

\begin{proof}
We use the same notation as that employed in the proof of Theorem~\ref{theorem:hxnmax}. To prove the existence of $r_d>0$, we show that it is possible to construct a sequence of intervals $U_{\ell,1}, \dots, U_{\ell,n_{\ell}}$ such that, for any $j=1,\dots,n_{\ell}$,
 \begin{equation*}
       \left \lvert U_{\ell,j} \right \rvert = h^{\max,d}_{X_n}, \qquad \operatorname{card}\left(U_{\ell,j}\cap X_n\right)\geq d+1, 
   \end{equation*}
   and
   \begin{equation*}
       \bigcup\limits_{j=1}^{n_{\ell}} U_{\ell,j} = I_{\ell}.
   \end{equation*}
The argument follows similar lines to those used in the proof of Theorem~\ref{theorem:hxnmax}, and relies on the fact that any interval of length $h_{X_n}^{\max, d}$ with at least one endpoint in $X_n$  must contain at least $d+1$ nodes. 
\end{proof}
\medskip

We notice that $r_{n+2}$ does not exist in any case, since $\mathrm{card}(X_n)=n+1$. We set 
\begin{equation} \label{dmax}
  d_{\max}=\max\left\{ d \,:\, r_d\text{ exists} \right\}.  
\end{equation}
By construction, the existence of $r_{d_{\max}}$ implies the existence of $r_d$ for any $d \in \left\{0,\dots,d_{\max} \right\}$. 

\bigskip

In the following, we assume that $h^{\min} \geq h^{\max,d}_{X_n}$ and therefore that $d\in \left\{0,\dots,d_{\max}\right\}$. For any $\ell=0,\dots,m$, let $U_{\ell,1},\dots,U_{\ell,n_{\ell}}$ be a covering of $I_{\ell}$ as in the proof of Theorem~\ref{theoremhdxnmax}, that is such that 
\begin{eqnarray*}
 \left\lvert U_{\ell,j}\right\rvert =h^{\max,d}_{X_n}, \quad
 \mathrm{card}\left(  U_{\ell,j} \cap X_n\right) \geq d+1, \quad U_{\ell,j}\cap U_{\ell,j+1} \neq \emptyset.
\end{eqnarray*}
We set 
\begin{equation}
N_{\ell,j}= U_{\ell,j}\cap X_n, \quad \ell=0, \dots, m, \quad j=0,\dots,n_{\ell}.
\end{equation}
Sometimes, in the following, for the sake of simplicity we write the pair of indices $(\ell,j)$ by using only one index $\iota$, that is
\begin{equation} \label{notation}
    N_{\ell,j}\rightarrow N_{\iota}, \quad U_{\ell,j}\rightarrow U_{\iota}, \quad\iota=1,\dots,M,
\end{equation}
where 
\begin{equation}
    M=\sum\limits_{\ell=0}^m \left(n_{\ell}+1\right).
\end{equation}
For any $\iota=1,\dots,M$, we denote by $p_{\iota}(f,x)$ the lowest degree polynomial interpolating on the nodes of $N_{\iota}$ and,
for any $K\in \mathbb{N}$, we introduce sets of pairwise distinct points,
\begin{equation}\label{Ciota}
    C_{\iota}=\left\{\xi^{\iota}_{\kappa} \, : \,  a_{\iota}<\xi^{\iota}_{1}<\dots<\xi^{\iota}_{K}<b_{\iota}\right\}, \quad a_{\iota}= \min U_{\iota}, \ b_{\iota}=\max U_{\iota}.
\end{equation}

By construction, it results 
\begin{equation}\label{cst}
b_{\iota}-a_{\iota}=h^{\max,d}_{X_n}, \quad \iota=1,\dots,M.  
\end{equation}
We introduce the rational operator
\begin{equation}\label{Qoperator}
    Q_{\mu}[f](x)=\sum\limits_{\iota=1}^M B_{\mu,\iota}(x)p_{\iota}(f,x), \quad \mu\in 2\mathbb{Z}_{+},
\end{equation}
where
\begin{equation}\label{multinodefunction}
B_{\mu,\iota}(x)=\frac{\prod\limits_{\kappa=1}^{K} \left\lvert x-\xi^{\iota}_{\kappa} \right\rvert^{-\mu}}{\sum\limits_{\tau=1}^M\prod\limits_{\kappa=1}^{K} \left\lvert x-\xi^{\tau}_{\kappa} \right\rvert^{-\mu}}, \quad \iota=1,\dots,M,
\end{equation}
are the univariate \textit{multinode Shepard functions} based on $\left\{C_{\iota}\right\}^M_{\iota=1}$.  
\medskip

In order to give an explanation of the method's underlying idea, we assume that $\left[a,b\right]=\left[-1,1\right]$, 
\begin{equation*}
    x_i=-1+\frac{2i}{n}, \quad n\in2\mathbb{Z}_{+}, \quad i=0,\dots,n,
\end{equation*}
and $s_1=x_{\frac{n}{2}}=0$ is the unique discontinuity point, that is $m=1$. We set $d=n/2-1$ and we have $N_1=\left\{x_0,\dots,x_{\frac{n}{2}-1}\right\}$ and $N_2=\left\{x_{\frac{n}{2}+1},\dots,x_{n}\right\}$.
For $\mu=4$, the multinode Shepard functions based on $K=10$ equispaced points $\xi^{\iota}_{\kappa}$  
\begin{equation*}
B_{4,1}(x)=\frac{\prod\limits_{\kappa=1}^{K} \left\lvert x-\xi^1_{{\kappa}} \right\rvert^{-4}}{\prod\limits_{\kappa=1}^{K} \left\lvert x-\xi^1_{{\kappa}} \right\rvert^{-4}+\prod\limits_{\kappa=1}^{K} \left\lvert x-\xi^2_{{\kappa}} \right\rvert^{-4}}, 
\quad
B_{4,2}(x)=\frac{\prod\limits_{\kappa=1}^{K} \left\lvert x-\xi^2_{{\kappa}} \right\rvert^{-4}}{\prod\limits_{\kappa=1}^{K} \left\lvert x-\xi^1_{{\kappa}} \right\rvert^{-4}+\prod\limits_{\kappa=1}^{K} \left\lvert x-\xi^2_{{\kappa}} \right\rvert^{-4}},
\end{equation*}
are plotted in Figure~\ref{MSB}. As we can see, $B_{4,1}(x)$ and $B_{4,2}(x)$ well approximate the characteristic functions of the intervals $\left[-1,0\right]$, $\left[0,1\right]$, respectively. More in general, it is well known that the multinode Shepard functions are non-negative
\begin{equation}\label{nonnegprop}
    B_{\mu ,\iota}\left(x\right) \geq 0,
\end{equation}
sum up to $1$
\begin{equation}\label{partofuni}
\sum\limits_{\iota=1}^M B_{\mu ,\iota}\left(x\right)=1,
\end{equation}
and $B_{\mu,\iota}(x)$ goes rapidly to zero outside the interval $U_{\iota}$, as shown in Figure~\ref{MSB} for the considered particular case.
 Moreover, $B_{\mu,\iota}(x)$ vanishes at all points $\xi^{\lambda}_{\kappa}$ that are not in $C_{\iota}$, that is 
\begin{equation}\label{vanishes}
    B_{\mu,\iota}(\xi^{\lambda}_{\kappa})=0, \qquad \xi^{\lambda}_{\kappa} \notin C_{\iota}.
\end{equation}
By properties~\eqref{partofuni} and~\eqref{vanishes} easily follows that $B_{\mu,\iota}$ and $B_{\mu,\lambda}$ could present oscillations in $U_{\iota}\cap U_{\lambda}$, $\iota \neq \lambda$, since the presence of the points $\xi^{\lambda}_{\kappa} \notin C_{\iota}$ and $\xi^{\iota}_{\kappa} \notin C_{\lambda}$. These oscillations can be avoided by forcing the sets $C_\iota$ and $C_{\lambda}$ to share the same points on the overlap $U_\iota \cap U_{\lambda}$.  
As a consequence, the function $B_{\mu,\iota}$ is a $C^{\infty}$ rational approximation of the characteristic function of the interval $U_{\iota}$
\begin{equation*}
   \chi_{U_{\iota}}(x) = 
\begin{cases} 
1 & \text{if } x \in U_{\iota}, \\
0 & \text{if }  x \notin U_{\iota}.
\end{cases}
\end{equation*}
Therefore, the product $B_{\mu,\iota}(x)p_{\iota}(f,x)$ results in a $C^{\infty}$ rational function which is near to $p_{\iota}(f,x)$ in the open interval $\mathring{U_{\iota}}$ and near to zero outside the interval $U_{\iota}$. When $U_{\iota-1}\cap U_{\iota}\neq\emptyset$, both polynomials $p_{\iota-1}(f,x)$ and $p_{\iota}(f,x)$  contribute to the approximant $Q_{\mu}[f](x)$ in the interval $U_{\iota-1} \cap U_{\iota}$.
\begin{figure}
    \centering   \includegraphics[width=0.5\linewidth]{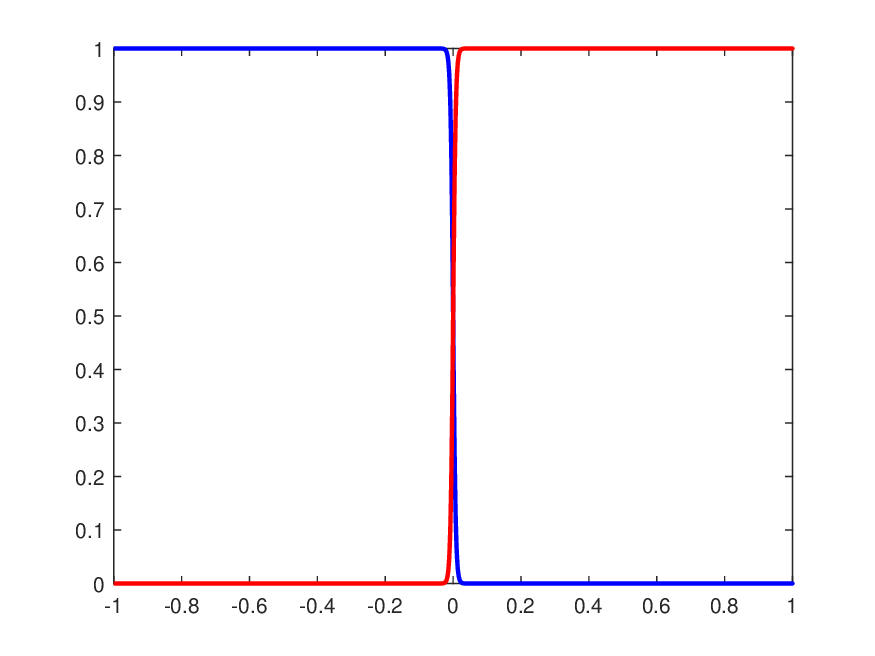}
 \caption{Multinode Shepard function $B_{4,1}$ (in blue) and $B_{4,2}$ (in red) obtained by subdividing the unit interval $\left[-1,1\right]$ in the two subintervals $\left[-1,0\right]$ and $\left[0,1\right]$ by using $K=10$ equispaced nodes in each subinterval.}
    \label{MSB}
\end{figure}

\begin{theorem}
Let $d\in \mathbb{N}_0$ be a positive integer number. Under the assumption 
 \begin{equation*}
     h^{\min}\geq h^{\max,d}_{X_n},
 \end{equation*}
the operator $Q_{\mu}[f](x)$ is a quasi-interpolation operator. 
\end{theorem}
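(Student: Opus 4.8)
The plan is to establish the defining property of a quasi-interpolation operator, namely the reproduction of all polynomials of degree at most $d$. I would fix an arbitrary polynomial $\pi$ with $\deg \pi \le d$ and prove that $Q_{\mu}[\pi](x)=\pi(x)$ for every $x \in \bigcup_{\ell=0}^m I_{\ell}$; the statement then follows directly from the definition~\eqref{Qoperator} of the operator.

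First I would examine the local building blocks $p_{\iota}(\pi,\cdot)$. By the covering constructed in Theorem~\ref{theoremhdxnmax}, each node set $N_{\iota}=U_{\iota}\cap X_n$ contains at least $d+1$ pairwise distinct nodes. Since $\pi$ is itself a polynomial of degree at most $d$ that agrees with the sampled values $\pi(x_i)$ at every node $x_i\in N_{\iota}$, and since the lowest-degree interpolant of such data on $\ge d+1$ distinct nodes is unique and equals that polynomial, I would conclude that $p_{\iota}(\pi,x)=\pi(x)$ for every $\iota=1,\dots,M$.

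The second step is the collapse of the blended sum. Substituting $p_{\iota}(\pi,\cdot)=\pi$ into~\eqref{Qoperator}, factoring out $\pi(x)$, and invoking the partition-of-unity property~\eqref{partofuni}, I obtain
\[
Q_{\mu}[\pi](x)=\sum_{\iota=1}^M B_{\mu,\iota}(x)\,p_{\iota}(\pi,x)=\pi(x)\sum_{\iota=1}^M B_{\mu,\iota}(x)=\pi(x),
\]
so that $Q_{\mu}$ reproduces every polynomial of degree at most $d$.

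The step I expect to require the most care is the first one, namely verifying $p_{\iota}(\pi,\cdot)=\pi$ when $N_{\iota}$ may contain strictly more than $d+1$ nodes: there I must argue that the \emph{lowest-degree} interpolant of data sampled from $\pi$ is precisely $\pi$, rather than some spurious higher-degree interpolant forced by the extra nodes. This is exactly where the hypothesis $h^{\min}\ge h^{\max,d}_{X_n}$ enters, through Theorem~\ref{theoremhdxnmax}, which guarantees that every $U_{\iota}$ of the admissible covering carries at least $d+1$ nodes. The remaining step is purely formal and uses only the identity~\eqref{partofuni}, which holds for the multinode Shepard functions independently of $f$.
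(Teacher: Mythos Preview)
Your proposal is correct and follows essentially the same route as the paper: show that each local interpolant $p_{\iota}(\pi,\cdot)$ equals $\pi$ whenever $\deg\pi\le d$, then collapse the blended sum via the partition-of-unity identity~\eqref{partofuni}. The paper states the first step more tersely (simply asserting $p_{\iota}(f,x)=f(x)$ once $\deg f\le d\le\min_\iota\deg p_\iota(f,\cdot)$), whereas you spell out why the \emph{lowest-degree} interpolant on possibly more than $d+1$ nodes is still $\pi$; this extra care is warranted and the argument is sound.
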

\begin{proof}
   Let $f(x)$ be a polynomial of degree less than or equal to 
   $\displaystyle{\min_{\iota=1,\dots,M} \operatorname{deg}(p_{\iota}(f,x))}\geq d.$ Then, 
   $p_{\iota}(f,x)=f(x)$ for each $\iota=1,\dots,M$, and by~\eqref{partofuni}, it results 
\begin{equation*}
Q_{\mu}[f](x)=\sum\limits_{\iota=1}^M B_{\mu,\iota}(x)p_{\iota}(f,x)=\sum_{\iota=1}^M B_{\mu,\iota}(x) f(x)= f(x), \quad x\in[a,b].
\end{equation*}
\end{proof}

\section{Error bound}\label{sec3}
In this section, we discuss the error bound of the quasi-interpolation operator~\eqref{Qoperator} on the intervals $I_{\ell}$ where the function $f$ is continuous. For the sake of simplicity, we assume that the nodes of $X_n$ are equispaced and that the covering $\left\{U_{\ell,j}\right\}$ of each interval of continuity $I_{\ell}$ is realized by intervals of the family 
\begin{equation*}
    \left\{ \left[x_i,x_i+\left(d+1\right)\dfrac{b-a}{n}\right] \right\}_{x_i \in I_{\ell}}
\end{equation*}
each of which intersects the subsequent one in only one node. By assuming that the points $\xi^{\iota}_{\kappa}$ are equispaced, we show that it is possible to identify a proper subset $\Xi \subsetneq \bigcup\limits_{\ell=0}^m I_{\ell}$
\begin{equation}\label{Xi}
    \Xi= \bigcup\limits_{\ell=0}^m \left[\xi^{\ell}_1,\xi^{n_\ell}_K\right],
\end{equation}
where the approximation to $f(x)$ provided by $Q_{\mu}[f](x)$ follows that one provided by $p_{\iota}(f,x)$ in each interval $U_{\iota}\subset \bigcup\limits_{\ell=0}^m I_{\ell}$.
In the complementary set $\bigcup\limits_{\ell=0}^m I_{\ell} \setminus \Xi$ the accuracy of approximation wastes.

The following Lemma, which refers to the simplest non-trivial case of two multinode Shepard functions based on the points $\left\{\xi^1_{\kappa}\right\}$, $\left\{\xi^2_{\kappa}\right\}$, $\kappa=1,\dots,K$, of two consecutive intervals of equal lengths (see Fig.~\ref{xleft} and Fig.~\ref{xright}), is useful to provide bounds of the multinode Shepard function $B_{\mu,\iota}(x)$, for different relative positions of $x \in \Xi$.

\begin{figure}
    \centering
  \includegraphics[width=0.8\linewidth]{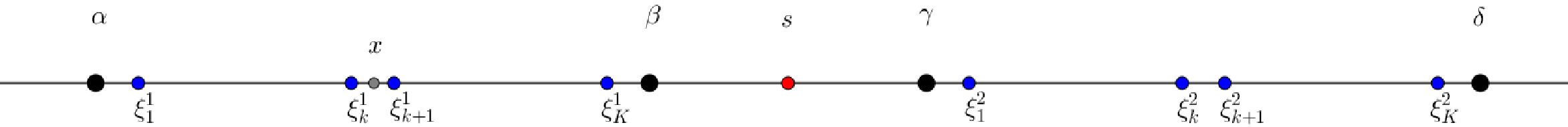}
    \caption{Plot of $K$ equispaced points (in blue) for each of the sets $C_1$ (in the left) and $C_2$ (in the right). The point $x\in (\xi^1_k,\xi^1_{k+1})$ is displayed in gray while the discontinuity point is displayed in red.}
    \label{xleft}
\end{figure}
\begin{figure}
    \centering
  \includegraphics[width=0.8\linewidth]{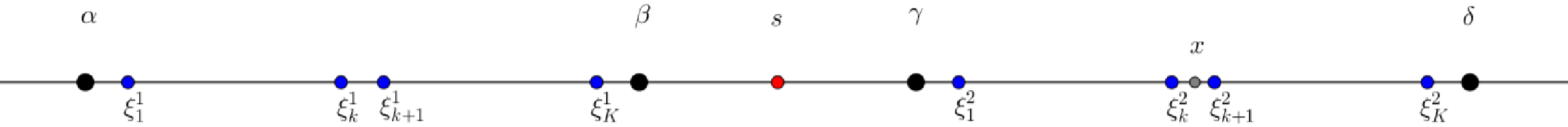}
    \caption{Plot of $K$ equispaced points (in blue) for each of the sets $C_1$ (in the left) and $C_2$ (in the right). The point $x\in (\xi^2_k,\xi^2_{k+1})$ is displayed in gray while the discontinuity point is displayed in red.}
    \label{xright}
\end{figure}

\begin{lemma}\label{lemmaboundBmu}
Let $\alpha,\beta,\gamma,\delta\in \mathbb{R}$ be such that $\alpha<\beta\leq\gamma<\delta$ with
\begin{equation}\label{abc}
  \beta-\alpha=\delta-\gamma=h^{\max,d}_{X_n}
\end{equation}
 and let  
\begin{equation*}
  C_{1}=\left\{\xi^{1}_{\kappa}=\alpha+\frac{\beta-\alpha}{K+1}\kappa \, :\, \kappa=1,\dots,K \right\} \subset (\alpha,\beta), 
\end{equation*}  
\begin{equation*}
  C_{2}=\left\{\xi^{2}_{\kappa}=\gamma+\frac{\delta-\gamma}{K+1}\kappa \, :\, \kappa=1,\dots,K \right\} \subset (\gamma,\delta).
\end{equation*}
Then, for any $x\in \left(\xi^1_{k},\xi^1_{k+1}\right)$, $k=1,\dots,K-1$, we have 
\begin{equation} \label{Bmu2boundgen}
    B_{\mu,2}(x)\leq \left(\frac{k!(K-k)!}{\prod\limits_{\kappa=1}^K \left(K-k+\kappa\right)+(K+1)^K\left(\frac{\gamma-\beta}{h^{\max,d}_{X_n}}\right)^K}\right)^{\mu}=F_{\mu}(\gamma-\beta,h^{\max,d}_{X_n},k,K).
\end{equation}
Similarly, for any $x\in \left(\xi^2_{k},\xi^2_{k+1}\right)$, $k=1,\dots,K-1$, we have 
\begin{equation} \label{Bmu1boundgen}
    B_{\mu,1}(x) \leq \left(\frac{k!(K-k)!}{\prod\limits_{\kappa=1}^K \left(k+\kappa\right)+(K+1)^K\left(\frac{\gamma-\beta}{h^{\max,d}_{X_n}}\right)^K}\right)^{\mu}=F_{\mu}(\gamma-\beta,h^{\max,d}_{X_n},K-k,K). 
\end{equation} 
\end{lemma}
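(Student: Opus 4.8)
The plan is to exploit that here only two node sets are involved ($M=2$), so each multinode Shepard function reduces to a single ratio that can be estimated by comparing two products. Writing $P_1(x)=\prod_{\kappa=1}^K|x-\xi^1_\kappa|$ and $P_2(x)=\prod_{\kappa=1}^K|x-\xi^2_\kappa|$, definition~\eqref{multinodefunction} with $M=2$ gives, after multiplying numerator and denominator by $\bigl(P_1(x)P_2(x)\bigr)^\mu$ (recall $\mu$ is even),
\[ B_{\mu,2}(x)=\frac{P_1(x)^\mu}{P_1(x)^\mu+P_2(x)^\mu}\le\left(\frac{P_1(x)}{P_2(x)}\right)^\mu. \]
Since $x\in(\xi^1_k,\xi^1_{k+1})$ lies in the first interval, $P_2(x)>0$, and it suffices to bound $P_1(x)/P_2(x)$ uniformly in $x$ by the quantity inside the parentheses in~\eqref{Bmu2boundgen}.

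First I would set the common spacing $\eta=h^{\max,d}_{X_n}/(K+1)$ and the parameter $t=x-\xi^1_k\in(0,\eta)$, so that $\xi^1_\kappa=\alpha+\eta\kappa$ and $\xi^2_\kappa=\gamma+\eta\kappa$ by~\eqref{abc}. A direct computation of the absolute values, splitting the index according to whether $\xi^1_\kappa$ lies to the left or right of $x$, yields
\[ P_1(x)=\prod_{j=0}^{k-1}(j\eta+t)\prod_{i=1}^{K-k}(i\eta-t),\qquad P_2(x)=\prod_{\kappa=1}^{K}\bigl[(\gamma-\beta)+\eta(K+1-k+\kappa)-t\bigr], \]
where for $P_2$ I use $\gamma-\alpha=(\gamma-\beta)+(K+1)\eta$. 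The crucial point is that $P_1$ vanishes at both endpoints $t=0$ and $t=\eta$, so one cannot simply evaluate at an endpoint; instead I would bound each factor termwise, uniformly in $t$. Using $j\eta+t\le(j+1)\eta$ and $i\eta-t\le i\eta$ gives $P_1(x)\le k!\,(K-k)!\,\eta^K$, while using $t<\eta$ in each factor of $P_2$ and factoring out $\eta$ gives $P_2(x)\ge\eta^K\prod_{\kappa=1}^K\bigl[(K-k+\kappa)+c\bigr]$ with $c=(K+1)(\gamma-\beta)/h^{\max,d}_{X_n}\ge0$.

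The last step, and the one place where the specific form of the bound is recovered, is the elementary inequality for the expanded product: since all $K-k+\kappa>0$ and $c\ge0$, expanding $\prod_{\kappa=1}^K\bigl[(K-k+\kappa)+c\bigr]$ and discarding every cross term but the constant and the top one leaves
\[ \prod_{\kappa=1}^{K}\bigl[(K-k+\kappa)+c\bigr]\ge\prod_{\kappa=1}^{K}(K-k+\kappa)+c^K=\prod_{\kappa=1}^{K}(K-k+\kappa)+(K+1)^K\Bigl(\tfrac{\gamma-\beta}{h^{\max,d}_{X_n}}\Bigr)^K. \]
Dividing the bound on $P_1$ by this bound on $P_2$, the factors $\eta^K$ cancel and I obtain exactly~\eqref{Bmu2boundgen}. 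For~\eqref{Bmu1boundgen} I would either repeat the computation for $B_{\mu,1}(x)=P_2(x)^\mu/(P_1(x)^\mu+P_2(x)^\mu)\le(P_2(x)/P_1(x))^\mu$ with $x\in(\xi^2_k,\xi^2_{k+1})$, or simply invoke the reflection about the midpoint $(\beta+\gamma)/2$, which interchanges the two point sets and sends the local index $k$ to $K-k$; this turns the denominator $\prod_\kappa(K-k+\kappa)$ into $\prod_\kappa(k+\kappa)$ and yields $F_\mu(\gamma-\beta,h^{\max,d}_{X_n},K-k,K)$. The main obstacle is purely bookkeeping: obtaining the $t$-uniform termwise estimates and correctly tracking the shifted indices in $P_2$, rather than any deep idea.
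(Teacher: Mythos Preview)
Your proof is correct and follows essentially the same route as the paper's: reduce $B_{\mu,2}(x)\le (P_1(x)/P_2(x))^\mu$, bound each factor of $P_1$ termwise by the nearest endpoint of $(\xi^1_k,\xi^1_{k+1})$ to get $k!(K-k)!\,\eta^K$, bound each factor of $P_2$ from below by evaluating at $x=\xi^1_{k+1}$ (your $t\uparrow\eta$), and then use the elementary expansion inequality $\prod_\kappa(a_\kappa+c)\ge\prod_\kappa a_\kappa+c^K$. Your explicit introduction of $\eta,t,c$ and your clean statement of this last inequality make the argument easier to follow than the paper's chain of displayed estimates, and your symmetry remark for~\eqref{Bmu1boundgen} is a legitimate shortcut that the paper forgoes in favour of repeating the computation verbatim.
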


\begin{proof}
 Let $x\in \left(\xi^1_{k},\xi^1_{k+1}\right)$, $k=1,\dots,K-1$. Then we have
    \begin{eqnarray*}
B_{\mu,2}(x)&=&\frac{\prod\limits_{\kappa=1}^{K} \left\lvert x-\xi^2_{{\kappa}} \right\rvert^{-\mu}}{\prod\limits_{\kappa=1}^{K} \left\lvert x-\xi^1_{{\kappa}} \right\rvert^{-\mu}+\prod\limits_{\kappa=1}^{K} \left\lvert x-\xi^2_{{\kappa}} \right\rvert^{-\mu}} \leq \frac{\prod\limits_{\kappa=1}^{K} \left\lvert x-\xi^1_{{\kappa}} \right\rvert^{\mu}}{\prod\limits_{\kappa=1}^{K} \left\lvert x-\xi^2_{{\kappa}} \right\rvert^{\mu}} \\
&\leq& \frac{\left\lvert x-\xi^1_{1} \right\rvert^{\mu} \cdots \left\lvert x-\xi^1_{k} \right\rvert^{\mu} \cdot \left\lvert x-\xi^1_{k+1} \right\rvert^{\mu} \cdots \left\lvert x-\xi^1_{K} \right\rvert^{\mu}}{\left\lvert \xi^1_{k+1}-\xi^2_{1} \right\rvert^{\mu} \cdot \left\lvert \xi^1_{k+1}-\xi^2_{2} \right\rvert^{\mu} \cdots \left\lvert \xi^1_{k+1}-\xi^2_{K-1} \right\rvert^{\mu} \cdot \left\lvert \xi^1_{k+1}-\xi^2_{K} \right\rvert^{\mu}} \\
&\leq&  \left(\frac{(\beta-\alpha)^{K}\frac{1+k-1}{K+1}\cdots \frac{1+0}{K+1}\cdot\frac{1+0}{K+1}\cdots \frac{1+K-k-1}{K+1}}{\left(\frac{(\beta-\alpha)(K-k)+(\delta-\gamma)}{K+1}+(\gamma-\beta)\right)\cdots\left(\frac{(\beta-\alpha)(K-k)+(\delta-\gamma)K}{K+1}+(\gamma-\beta)\right)}\right)^{\mu} \\
&\leq&  \left(\frac{(\beta-\alpha)^{K}k!(K-k)!}{\prod\limits_{\kappa=1}^K \left(\left(\beta-\alpha\right)\left(K-k\right)+\kappa\left(\delta-\gamma\right)\right)+(K+1)^K(\gamma-\beta)^K}\right)^{\mu}\\
&\leq&  \left(\frac{k!(K-k)!}{\prod\limits_{\kappa=1}^K \left(K-k+\kappa\right)+(K+1)^K\left(\frac{\gamma-\beta}{\beta-\alpha}\right)^K}\right)^{\mu}\\
&\leq&  \left(\frac{k!(K-k)!}{\prod\limits_{\kappa=1}^K \left(K-k+\kappa\right)+(K+1)^K\left(\frac{\gamma-\beta}{h^{\max,d}_{X_n}}\right)^K}\right)^{\mu},
\end{eqnarray*}
where in the last inequality we used the hypothesis~\eqref{abc}.
Similarly, if $x\in \left(\xi^2_{k},\xi^2_{k+1}\right)$, $k=1,\dots,K-1$, we have 
\begin{eqnarray*}
B_{\mu,1}(x)&=&\frac{\prod\limits_{\kappa=1}^{K} \left\lvert x-\xi^1_{{\kappa}} \right\rvert^{-\mu}}{\prod\limits_{\kappa=1}^{K} \left\lvert x-\xi^1_{{\kappa}} \right\rvert^{-\mu}+\prod\limits_{\kappa=1}^{K} \left\lvert x-\xi^2_{{\kappa}} \right\rvert^{-\mu}} \leq \frac{\prod\limits_{\kappa=1}^{K} \left\lvert x-\xi^2_{{\kappa}} \right\rvert^{\mu}}{\prod\limits_{\kappa=1}^{K} \left\lvert x-\xi^1_{{\kappa}} \right\rvert^{\mu}} \\
&\leq& \frac{\left\lvert x-\xi^2_{1} \right\rvert^{\mu} \cdots \left\lvert x-\xi^2_{k} \right\rvert^{\mu} \cdot \left\lvert x-\xi^2_{k+1} \right\rvert^{\mu} \cdots \left\lvert x-\xi^2_{K} \right\rvert^{\mu}}{\left\lvert \xi^2_{k}-\xi^1_{1} \right\rvert^{\mu} \cdot \left\lvert \xi^2_{k}-\xi^1_{2} \right\rvert^{\mu} \cdots \left\lvert \xi^2_{k}-\xi^1_{K-1} \right\rvert^{\mu} \cdot \left\lvert \xi^2_{k}-\xi^1_{K} \right\rvert^{\mu}} \\
&\leq&  \left(\frac{(d-c)^{K}\frac{1+k-1}{K+1}\cdots \frac{1+0}{K+1}\cdot\frac{1+0}{K+1}\cdots \frac{1+K-k-1}{K+1}}{\left(\frac{(\delta-\gamma)k+(\beta-\alpha)K}{K+1}+(\gamma-\beta)\right)\cdots\left(\frac{(\delta-\gamma)k+(\beta-\alpha)}{K+1}+(\gamma-\beta)\right)}\right)^{\mu} \\
&\leq&  \left(\frac{(\delta-\gamma)^{K}k!(K-k)!}{\prod\limits_{\kappa=1}^K \left(\left(\delta-\gamma\right)k+\kappa\left(\beta-\alpha\right)\right)+(K+1)^K(\gamma-\beta)^K}\right)^{\mu}\\
&\leq&  \left(\frac{k!(K-k)!}{\prod\limits_{\kappa=1}^K \left(k+\kappa\right)+(K+1)^K\left(\frac{\gamma-\beta}{\delta-\gamma}\right)^K}\right)^{\mu}\\
&\leq&  \left(\frac{k!(K-k)!}{\prod\limits_{\kappa=1}^K \left(k+\kappa\right)+(K+1)^K\left(\frac{\gamma-\beta}{h^{\max,d}_{X_n}}\right)^K}\right)^{\mu}.    
\end{eqnarray*}
\end{proof}
\begin{proposition} \label{proposition}
    For any $K\in\mathbb{N}$ and $k=1,\dots,K-1$, we have 
    \begin{equation}
         F_{\mu}(\gamma-\beta,h^{\max,d}_{X_n},k,K)\leq F_{\mu}(\gamma-\beta,h^{\max,d}_{X_n},K-1,K).
    \end{equation}
\end{proposition}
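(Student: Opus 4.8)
The plan is to discard the outer exponent and reduce everything to a comparison of the two bracketed fractions. Since $\mu\in 2\mathbb{Z}_+$ is positive and each bracket in~\eqref{Bmu2boundgen} is a quotient of positive quantities, $x\mapsto x^{\mu}$ is strictly increasing on $(0,\infty)$, so it suffices to prove the inequality for the bases, i.e. to show that
\[
g(k):=\frac{k!(K-k)!}{\prod_{\kappa=1}^K (K-k+\kappa)+c}\le g(K-1),\qquad c:=(K+1)^K\left(\frac{\gamma-\beta}{h^{\max,d}_{X_n}}\right)^K\ge 0,
\]
for every $k\in\{1,\dots,K-1\}$. First I would rewrite the product in closed factorial form, $\prod_{\kappa=1}^K(K-k+\kappa)=(2K-k)!/(K-k)!$, so that the denominator of $g(k)$ becomes $(2K-k)!/(K-k)!+c$, while $g(K-1)$ has numerator $(K-1)!$ and denominator $(K+1)!+c$.

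Next I would cross-multiply (legitimate since both denominators are positive), which turns the target into an inequality that is \emph{affine} in $c$, namely $P_1+B\,c\le P_2+D\,c$ with
\[
B=k!(K-k)!,\quad D=(K-1)!,\quad P_1=k!\big((K-k)!\big)^2(K+1)!,\quad P_2=(K-1)!\,(2K-k)!.
\]
The inequality concerns one fixed value of $c\ge 0$, but it is cleanest (and sufficient) to establish it for every $c\ge 0$; I would then invoke the elementary fact that an affine inequality $P_1+Bc\le P_2+Dc$ holds for all $c\ge 0$ precisely when $P_1\le P_2$ (take $c=0$) and $B\le D$ (let $c\to\infty$). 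Hence it is enough to verify these two factorial inequalities.

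The inequality $B\le D$, i.e. $k!(K-k)!\le(K-1)!$, is equivalent to $\binom{K}{k}\ge K$; since $\binom{K}{k}$ is symmetric and unimodal in $k$, its minimum over $\{1,\dots,K-1\}$ is attained at the endpoints $k=1$ and $k=K-1$, where it equals $K$, which settles it. For $P_1\le P_2$ I would use $B\le D$ to bound $P_1=\big(k!(K-k)!\big)(K-k)!(K+1)!\le (K-1)!(K-k)!(K+1)!$ and thereby reduce to $(K-k)!(K+1)!\le(2K-k)!$; this last inequality follows by showing that $Q(k):=(2K-k)!/\big((K+1)!(K-k)!\big)$ is non-increasing in $k$, since $Q(k)/Q(k+1)=(2K-k)/(K-k)\ge 1$, together with $Q(K-1)=1$, so $Q(k)\ge 1$ throughout.

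The point that deserves care — and the reason a one-line monotonicity argument in $k$ will not work — is that $g$ itself is \emph{not} monotone in $k$ (for small $K$ one checks that it may first decrease and then increase), so the maximum over $\{1,\dots,K-1\}$ cannot be located by comparing consecutive values; the endpoint $k=K-1$ is singled out only through the direct comparison above. Apart from this, the main obstacle is purely the bookkeeping of the two factorial inequalities $B\le D$ and $P_1\le P_2$, both of which collapse to standard unimodality and monotonicity facts once the products are written in closed factorial form.
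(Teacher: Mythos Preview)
Your argument is correct, but the paper's proof is considerably shorter because it never cross-multiplies: it simply bounds numerator and denominator of the bracketed fraction separately. The numerator bound $k!(K-k)!\le (K-1)!$ is exactly your inequality $B\le D$; for the denominator the paper just notes that $K-k+\kappa\ge 1+\kappa$ termwise (since $k\le K-1$), so $\prod_{\kappa=1}^{K}(K-k+\kappa)\ge \prod_{\kappa=1}^{K}(1+\kappa)$, and hence the whole fraction is at most the value at $k=K-1$. Your route---cross-multiply, observe affinity in $c$, then verify the two endpoint inequalities---ends up proving the same two facts (your $(K-k)!(K+1)!\le(2K-k)!$ is exactly the denominator bound rewritten), but reaches the denominator bound via the ratio test on $Q(k)$ rather than the one-line termwise comparison. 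Your remark that $g$ is not monotone in $k$ is correct and interesting, but the paper sidesteps the issue entirely since bounding numerator and denominator independently makes monotonicity of $g$ irrelevant.
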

\begin{proof}
  We notice that, for any $K \in \mathbb{N}$, we have       
    \begin{equation*}
        k!(K-k)! \leq (K-1)!  \quad k=1,\dots,K-1.
    \end{equation*}
    In fact
    \begin{eqnarray*}
        \frac{k!(K-k)!}{(K-1)!} = \dfrac{\prod\limits_{j=0}^{k-2}(k-j)}{\prod\limits_{j=0}^{k-2}(K-1-j)} = \prod\limits_{j=0}^{k-2}\dfrac{k-j}{K-1-j} \leq 1.
    \end{eqnarray*} 
    Therefore
    \begin{eqnarray*}
        F_{\mu}(\gamma-\beta,h^{\max,d}_{X_n},k,K)&=& \left(\frac{k!(K-k)!}{\prod\limits_{\kappa=1}^K \left(K-k+\kappa\right)+(K+1)^K\left(\frac{\gamma-\beta}{h^{\max,d}_{X_n}}\right)^K}\right)^{\mu} \\
        &\leq& \left(\frac{(K-1)!}{\prod\limits_{\kappa=1}^K \left(1+\kappa\right)+(K+1)^K\left(\frac{\gamma-\beta}{h^{\max,d}_{X_n}}\right)^K}\right)^{\mu}\\
        &=& F_{\mu}(\gamma-\beta,h^{\max,d}_{X_n},K-1,K).
    \end{eqnarray*}
\end{proof}

\begin{theorem}\label{theoremerrorboundp}
    Let $ f \in C^{n_{\max}+1}(\Xi) $. Then
    \begin{eqnarray}\label{errboundp}
        \left\lvert f(x)-Q_{\mu}[f](x) \right\rvert  &\leq&  \max_{\substack{\iota=1,\dots,M \\ x\in U_{\iota}}}  \frac{\max\limits_{y \in \displaystyle{\cap_{\substack{\iota=1,\dots,M \\ x\in U_{\iota}}} U_{\iota}}} \left\lvert f^{\left(\mathrm{card}\left(N_{\iota}\right)+1\right)} (y) \right\rvert }{\left(\mathrm{card}\left(N_{\iota}\right)+1\right)!}   \left\lvert \omega_{\iota}(x)\right\rvert \\ \nonumber &+&  M\left
    (\left\lvert f({x})\right\rvert + \max_{\substack{\iota=1,\dots,M \\ x\notin U_{\iota}}}\left\lvert p_{\iota}(f,x)\right\rvert \right)  \max_{\substack{\iota=1,\dots,M \\ x\notin U_{\iota}}}F_{\mu}\left(a_{\iota+1}-b_{\iota},h^{\max,d}_{X_n}, K-1,K\right), 
    \end{eqnarray}
    where the second part of~\eqref{errboundp} can be made arbitrarily small by increasing $K$.
\end{theorem}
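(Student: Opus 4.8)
The plan is to start from the partition-of-unity identity~\eqref{partofuni}, which turns the error into a single blended sum. Since $\sum_{\iota=1}^M B_{\mu,\iota}(x)=1$, I can write $f(x)=\sum_{\iota=1}^M B_{\mu,\iota}(x)f(x)$ and subtract the definition~\eqref{Qoperator} of $Q_{\mu}[f](x)$ to obtain
\begin{equation*}
f(x)-Q_{\mu}[f](x)=\sum_{\iota=1}^M B_{\mu,\iota}(x)\bigl(f(x)-p_{\iota}(f,x)\bigr).
\end{equation*}
Using the nonnegativity~\eqref{nonnegprop}, I would then split this sum according to whether $x\in U_{\iota}$ or $x\notin U_{\iota}$ and estimate the two pieces separately; these will produce, respectively, the two summands on the right-hand side of~\eqref{errboundp}.

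For the indices $\iota$ with $x\in U_{\iota}$, the polynomial $p_{\iota}(f,x)$ is the genuine local interpolant of $f$ on the nodes $N_{\iota}$, so I would invoke the classical interpolation remainder formula to bound $\lvert f(x)-p_{\iota}(f,x)\rvert$ by $\frac{\max_y\lvert f^{(\mathrm{card}(N_{\iota})+1)}(y)\rvert}{(\mathrm{card}(N_{\iota})+1)!}\lvert\omega_{\iota}(x)\rvert$, where the intermediate point lies in $U_{\iota}$ and the maximum is taken over the intersection of all $U_{\iota}$ containing $x$ (which contains $x$ and is therefore nonempty). Since by~\eqref{nonnegprop} and~\eqref{partofuni} the surviving weights satisfy $0\le\sum_{\iota:\,x\in U_{\iota}}B_{\mu,\iota}(x)\le 1$, the sub-convex-combination estimate $\sum_{\iota}B_{\mu,\iota}(x)a_{\iota}\le\max_{\iota}a_{\iota}$ collapses this contribution exactly to the first summand of~\eqref{errboundp}.

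For the indices $\iota$ with $x\notin U_{\iota}$, I would first write $\lvert f(x)-p_{\iota}(f,x)\rvert\le\lvert f(x)\rvert+\max_{\iota:\,x\notin U_{\iota}}\lvert p_{\iota}(f,x)\rvert$ and then control the weight $B_{\mu,\iota}(x)$ itself. The decisive step is to discard all but one term in the denominator of~\eqref{multinodefunction}, namely the term coming from a cluster $C_{\lambda}$ whose interval $U_{\lambda}$ does contain $x$, so that $B_{\mu,\iota}(x)$ is dominated by the two-cluster ratio $\prod_{\kappa}\lvert x-\xi^{\lambda}_{\kappa}\rvert^{\mu}/\prod_{\kappa}\lvert x-\xi^{\iota}_{\kappa}\rvert^{\mu}$ that is estimated in Lemma~\ref{lemmaboundBmu}. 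That lemma bounds this ratio by $F_{\mu}(a_{\iota+1}-b_{\iota},h^{\max,d}_{X_n},k,K)$ for the appropriate sub-index $k$, and Proposition~\ref{proposition} then replaces $k$ by its worst case $K-1$. Bounding each of the at most $M$ such weights by this common quantity and factoring out $\lvert f(x)\rvert+\max_{\iota:\,x\notin U_{\iota}}\lvert p_{\iota}(f,x)\rvert$ yields precisely the second summand of~\eqref{errboundp}.

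The main obstacle I anticipate is the localization carried out in the third step: the multinode Shepard function~\eqref{multinodefunction} is assembled from all $M$ point clusters, whereas Lemma~\ref{lemmaboundBmu} concerns only two adjacent clusters. I would need to argue carefully that dropping the surplus denominator terms is legitimate (it can only enlarge the bound), that the retained cluster $C_{\lambda}$ can be selected so that the geometric hypotheses $\alpha<\beta\le\gamma<\delta$ of Lemma~\ref{lemmaboundBmu} are met, and that the decay-governing separation is correctly identified as $a_{\iota+1}-b_{\iota}$. Once this reduction to the two-cluster configuration is justified, the closing remark that the second summand can be made arbitrarily small by increasing $K$ follows immediately from the explicit form of $F_{\mu}$, whose denominator grows like $(K+1)^{K}$.
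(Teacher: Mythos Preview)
Your proposal is correct and follows essentially the same route as the paper: both start from the partition-of-unity identity~\eqref{partofuni} and nonnegativity~\eqref{nonnegprop} to obtain $\lvert f(x)-Q_{\mu}[f](x)\rvert\le\sum_{\iota}B_{\mu,\iota}(x)\lvert f(x)-p_{\iota}(f,x)\rvert$, split the sum according to $x\in U_{\iota}$ versus $x\notin U_{\iota}$, handle the first part via the Lagrange interpolation remainder together with $\sum_{\iota:x\in U_{\iota}}B_{\mu,\iota}(x)\le 1$, and handle the second part via the triangle inequality plus Lemma~\ref{lemmaboundBmu} and Proposition~\ref{proposition}, bounding the number of terms by~$M$. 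Your explicit remark that the reduction to the two-cluster configuration works because discarding nonnegative denominator terms in~\eqref{multinodefunction} can only enlarge the ratio is exactly the mechanism the paper uses implicitly.
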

\begin{proof}
Let $x\in \Xi$, in particular, there exists $\Bar{\ell}\in \{0,\dots,m\}$ such that $x \in \left[\xi^{\Bar{\ell}}_1,\xi^{n_{\Bar{\ell}}}_K\right]$. Using~\eqref{Qoperator} and leveraging both the partition of unity~\eqref{partofuni} and the non-negative properties of the multinode Shepard functions~\eqref{nonnegprop}, we obtain

	\begin{eqnarray}	
	e({x})&=&\left\lvert f({x})- Q_{\mu}[f](x)\right\rvert \notag \nonumber \\
	&=&\left\lvert \sum_{\iota=1}^M	B_{\mu ,\iota}\left( {x}\right)f({x})-\sum_{\iota=1}^M	B_{\mu,\iota}\left(x\right)p_{\iota}(f,x)\right\rvert \notag  \nonumber \\
	&\leq&\sum_{\iota=1}^M\left\lvert f({x})-p_{\iota}(f,x)\right\rvert B_{\mu ,\iota}\left(x\right)  \label{sumdifffpol} \\
    &\leq &  \sum_{\substack{\iota=1,\dots,M \\ x\in U_{\iota}}}\left\lvert f({x})-p_{\iota}(f,x)\right\rvert B_{\mu ,\iota}\left(x\right) +\sum_{\substack{\iota=1,\dots,M \\ x\notin U_{\iota}}} \left
    (\left\lvert f({x})\right\rvert + \left\lvert p_{\iota}(f,x)\right\rvert \right)
    B_{\mu ,\iota}\left(x\right) \nonumber \\
    &\le&   \max_{\substack{\iota=1,\dots,M \\ x\in U_{\iota}}}  \frac{\max\limits_{y \in \displaystyle{\cap_{\substack{\iota=1,\dots,M \\ x\in U_{\iota}}} U_{\iota}}} \left\lvert f^{\left(\mathrm{card}\left(N_{\iota}\right)+1\right)} (y) \right\rvert }{\left(\mathrm{card}\left(N_{\iota}\right)+1\right)!}   \left\lvert \omega_{\iota}(x)\right\rvert \nonumber \\ &+&  \left
    (\left\lvert f({x})\right\rvert + \max_{\substack{\iota=1,\dots,M \\ x\notin U_{\iota}}}\left\lvert p_{\iota}(f,x)\right\rvert \right) \sum_{\substack{\iota=1,\dots,M \\ x\notin U_{\iota}}}  B_{\mu ,\iota}(x) \nonumber \\ 
        &\le&   \max_{\substack{\iota=1,\dots,M \\ x\in U_{\iota}}}  \frac{\max\limits_{y \in \displaystyle{\cap_{\substack{\iota=1,\dots,M \\ x\in U_{\iota}}} U_{\iota}}} \left\lvert f^{\left(\mathrm{card}\left(N_{\iota}\right)+1\right)} (y) \right\rvert }{\left(\mathrm{card}\left(N_{\iota}\right)+1\right)!}   \left\lvert \omega_{\iota}(x)\right\rvert \nonumber \\ &+& \left
    (\left\lvert f({x})\right\rvert + \max_{\substack{\iota=1,\dots,M \\ x\notin U_{\iota}}}\left\lvert p_{\iota}(f,x)\right\rvert \right) \sum_{\substack{\iota=1,\dots,M \\ x\notin U_{\iota}}}  F_{\mu}\left(a_{\iota+1}-b_{\iota},h^{\max,d}_{X_n}, K-1,K\right) \nonumber \\
    &\leq& \max_{\substack{\iota=1,\dots,M \\ x\in U_{\iota}}}  \frac{\max\limits_{y \in \displaystyle{\cap_{\substack{\iota=1,\dots,M \\ x\in U_{\iota}}} U_{\iota}}} \left\lvert f^{\left(\mathrm{card}\left(N_{\iota}\right)+1\right)} (y) \right\rvert }{\left(\mathrm{card}\left(N_{\iota}\right)+1\right)!}   \left\lvert \omega_{\iota}(x)\right\rvert \nonumber \\ &+&  M\left
    (\left\lvert f({x})\right\rvert + \max_{\substack{\iota=1,\dots,M \\ x\notin U_{\iota}}}\left\lvert p_{\iota}(f,x)\right\rvert \right)  \max_{\substack{\iota=1,\dots,M \\ x\notin U_{\iota}}}F_{\mu}\left(a_{\iota+1}-b_{\iota},h^{\max,d}_{X_n}, K-1,K\right), \nonumber
\end{eqnarray}
where $\omega_{\iota}(x)$ is the nodal polynomial of degree $\mathrm{card}\left(N_{\iota}\right)$ which vanishes on the nodes of $N_{\iota}$~\cite{Gautschi:2011:NA}.
\end{proof}

In the following, we determine a bound in the infinity norm of the approximation error provided by the quasi-interpolation operator~\eqref{Qoperator}.
For this purpose, further notations are needed. Let 
\begin{equation*}
   n_{\max}=\max\limits_{\iota=1,\dots,M} \mathrm{card}\left(N_{\iota}\right),
\end{equation*}
\begin{equation*}
    \left\lVert f - Q_{\mu}[f]\right\rVert_{\infty,\Xi}=\max\limits
    _{x \in \Xi} \left\lvert f(x)-Q_{\mu}[f](x) \right\rvert,
\end{equation*}
and
\begin{equation*}
    h^S_{\min}=\min\limits_{i=1,\dots,m} \left\lvert x_{\alpha_i+1}-x_{\alpha_i} \right\rvert.
\end{equation*}
By fixing $y \in [a,b]$, we denote also by
      \begin{equation*}
         \mathcal{R}_{h^{\max,d}_{X_n}}(y)=\left\{{x}\in \mathbb{R}\, : \, y-h^{\max,d}_{X_n}<x\le y+h^{\max,d}_{X_n}
         \right\}
      \end{equation*}
the half-open interval of radius $h^{\max,d}_{X_n}$ centered at $y$ and by
\begin{equation}\label{valM}
    \mathcal{M}_{h^{\max,d}_{X_n}}= \sup_{y\in [a,b]}\mathrm{card}\left(\left\{C_{\iota}\, :\,  C_{\iota} \cap \mathcal{R}_{h^{\max,d}_{X_n}}({{y}}) \neq \emptyset, \, \iota=1,\dots,M\right\}\right),
\end{equation}
the maximum number of the sets $C_{\iota}$ intersecting $\mathcal{R}_{h^{\max,d}_{X_n}}(y)$.  
We notice that if $y=\frac{a_{\iota}+b_{\iota}}{2}$ then, by construction, $\mathcal{M}_{h^{\max,d}_{X_n}}\leq 2$, while in all other cases $\mathcal{M}_{h^{\max,d}_{X_n}}\leq 4$. Finally, we set 
\begin{equation}\label{Meffe}
    \mathcal{M}_{f}=\max_{\iota=1,\dots,M} \max_{\ell=0,\dots,m} \max_{y\in I_{\ell}}\left\lvert f^{\left(\mathrm{card}\left(N_{\iota}\right)+1\right)} (y) \right\rvert.
 \end{equation}
  
\begin{theorem} \label{errorbound}
   Let $ f \in C^{n_{\max}+1}(\Xi) $ and assume that $\mu>\dfrac{n_{\max}+2}{K}$. Then
    \begin{equation}
        \left\lVert f-Q_{\mu}[f] \right\rVert_{\infty,\Xi}  \leq \mathcal{E}_1+\mathcal{E}_2, 
    \end{equation}
    where
        \begin{equation}
        \mathcal{E}_1=\mathcal{C}_1 \max\limits_{\iota=1,\dots,M}\frac{ (2h^{\max,d}_{X_n})^{\mathrm{card}\left(N_{\iota}\right)+1}}{\left(\mathrm{card}\left(N_{\iota}\right)+1\right)!}
    \end{equation}
   tends to zero when $h_{X_n}$ tends to zero and 
    \begin{equation}
     \mathcal{E}_2 =  \mathcal{C}_2 F_{\mu}\left(h^S_{\min},h^{\max,d}_{X_n},K-1,K\right) 
    \end{equation}
    can be made arbitrarily small by increasing $K$. 
\end{theorem}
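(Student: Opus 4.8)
The plan is to derive the uniform estimate directly from the pointwise bound of Theorem~\ref{theoremerrorboundp} by passing to $\max_{x\in\Xi}$ and then controlling separately the two summands, which will become $\mathcal{E}_1$ and $\mathcal{E}_2$. First I would recall that for every $x\in\Xi$ the estimate~\eqref{errboundp} gives a bound of the form $T_1(x)+T_2(x)$, where $T_1(x)$ is the interpolation-error term carrying the nodal factor $\lvert\omega_\iota(x)\rvert$ and $T_2(x)$ is the leakage term carrying the factor $F_\mu$. Since $\left\lVert f-Q_\mu[f]\right\rVert_{\infty,\Xi}=\max_{x\in\Xi}\lvert f(x)-Q_\mu[f](x)\rvert$, it suffices to bound $\max_{x\in\Xi}T_1(x)$ by $\mathcal{E}_1$ and $\max_{x\in\Xi}T_2(x)$ by $\mathcal{E}_2$.

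For the first term I would bound the derivative factor uniformly by $\mathcal{M}_f$ from~\eqref{Meffe}. For the nodal factor, since each $U_\iota$ has length $h^{\max,d}_{X_n}$ and, by construction of the covering, any $x\in\Xi$ lying in some $U_\iota$ together with the nodes of $N_\iota$ is confined to a union of at most $\mathcal{M}_{h^{\max,d}_{X_n}}$ consecutive intervals of this length (see~\eqref{valM}), every linear factor entering the Lagrange remainder is at most $2h^{\max,d}_{X_n}$; this yields the power $(2h^{\max,d}_{X_n})^{\operatorname{card}(N_\iota)+1}$ appearing in $\mathcal{E}_1$, with $\mathcal{M}_f$ and the overlap count $\mathcal{M}_{h^{\max,d}_{X_n}}$ absorbed into $\mathcal{C}_1$. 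Because $h^{\max,d}_{X_n}\to0$ whenever $h_{X_n}\to0$, the factor $(2h^{\max,d}_{X_n})^{\operatorname{card}(N_\iota)+1}/(\operatorname{card}(N_\iota)+1)!$ tends to $0$, so $\mathcal{E}_1\to0$.

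For the second term I would first invoke Proposition~\ref{proposition} to replace each $F_\mu(a_{\iota+1}-b_\iota,h^{\max,d}_{X_n},k,K)$ by the larger quantity $F_\mu(a_{\iota+1}-b_\iota,h^{\max,d}_{X_n},K-1,K)$, removing the dependence on $k$. Next I would use that $F_\mu$, as defined in~\eqref{Bmu2boundgen}, is strictly decreasing in its first argument, since enlarging the gap $\gamma-\beta$ only enlarges the denominator; hence the maximum over $\iota$ is attained at the smallest admissible gap separating the supports of two polynomials that contribute across a discontinuity, namely $\min_i\lvert x_{\alpha_i+1}-x_{\alpha_i}\rvert=h^S_{\min}$. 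Bounding $\lvert f(x)\rvert$ and $\max_{x\notin U_\iota}\lvert p_\iota(f,x)\rvert$ uniformly on $\Xi$ and collecting them with the factor $M$ into $\mathcal{C}_2$, I would obtain $\mathcal{E}_2=\mathcal{C}_2\,F_\mu(h^S_{\min},h^{\max,d}_{X_n},K-1,K)$.

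The main obstacle is the uniform control of the leakage term as $K$ grows, and this is exactly where the hypothesis $\mu>\frac{n_{\max}+2}{K}$ enters. For $x\in\Xi$ outside $U_\iota$, the value $p_\iota(f,x)$ is an extrapolation whose size grows with the distance to $U_\iota$ and with the degree $\operatorname{card}(N_\iota)-1\le n_{\max}-1$, whereas $B_{\mu,\iota}(x)$ decays like a product of $\mu K$ negative powers of the distances to the points of $C_\iota$; the condition $\mu K>n_{\max}+2$ guarantees that this decay dominates, so that $B_{\mu,\iota}(x)\lvert p_\iota(f,x)\rvert$ stays bounded uniformly on $\Xi$ and $\mathcal{C}_2$ is finite. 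Finally, retaining only the factorial part of the denominator of $F_\mu$ yields $F_\mu(h^S_{\min},h^{\max,d}_{X_n},K-1,K)\le\bigl((K-1)!/(K+1)!\bigr)^{\mu}=\bigl(K(K+1)\bigr)^{-\mu}\to0$ as $K\to\infty$, so that $\mathcal{E}_2$ can be made arbitrarily small by increasing $K$, which completes the proof.
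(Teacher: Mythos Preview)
Your approach has a genuine gap. You propose to read off the uniform bound from the pointwise estimate of Theorem~\ref{theoremerrorboundp}, but that estimate splits the sum according to whether $x\in U_\iota$ or $x\notin U_\iota$, and its second summand $T_2(x)$ has already decoupled the product into
\[
M\Bigl(|f(x)|+\max_{\iota:\,x\notin U_\iota}|p_\iota(f,x)|\Bigr)\cdot\max_{\iota:\,x\notin U_\iota}F_\mu\bigl(a_{\iota+1}-b_\iota,\,h^{\max,d}_{X_n},K-1,K\bigr).
\]
Two problems follow. First, the indices $\iota$ with $x\notin U_\iota$ include all the covering intervals lying in the \emph{same} continuity piece $I_{\bar\ell}$ as $x$; for consecutive $U_\iota,U_{\iota+1}$ inside $I_{\bar\ell}$ one has $a_{\iota+1}-b_\iota=0$ by construction, so the maximum of $F_\mu$ over these $\iota$ is $F_\mu(0,\cdot,K-1,K)$, not $F_\mu(h^S_{\min},\cdot,K-1,K)$. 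Your identification of $h^S_{\min}$ as ``the smallest admissible gap'' is therefore wrong for this decomposition. Second, once the product has been decoupled, passing to $\sup_{x\in\Xi}$ forces you to bound $\sup_{x\in\Xi}\max_{\iota:\,x\notin U_\iota}|p_\iota(f,x)|$, i.e.\ an extrapolated polynomial value at distances of order $b-a$ from $U_\iota$; this is not controlled by $\|f\|_\infty$ and cannot be offset by the now separate $F_\mu$ factor. Your last paragraph tacitly returns to the undecoupled product $B_{\mu,\iota}(x)\,|p_\iota(f,x)|$, which is precisely what Theorem~\ref{theoremerrorboundp} has already discarded.

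The paper proceeds with a different split: at a fixed $x\in I_{\bar\ell}$ it separates the terms with $U_\iota\cap I_{\bar\ell}\neq\emptyset$ (called $e_1$) from those with $U_\iota\cap I_{\bar\ell}=\emptyset$ (called $e_2$). For $e_1$ the product $|f(x)-p_\iota(f,x)|\,B_{\mu,\iota}(x)$ is kept together and a dyadic covering of $I_{\bar\ell}$ by half-open intervals $A_\theta$ of radius $h^{\max,d}_{X_n}$ centered at $x\pm 2\theta h^{\max,d}_{X_n}$ is introduced; for $C_\iota\in V_\theta$ one gets $B_{\mu,\iota}(x)\le(\theta-\tfrac12)^{-K\mu}$ while $|\omega_\iota(x)|\le\bigl((2\theta+2)h^{\max,d}_{X_n}\bigr)^{\operatorname{card}(N_\iota)+1}$, and the resulting series $\sum_{\theta\ge2}(\theta+1)^{n_{\max}+1}/(\theta-\tfrac12)^{K\mu}$ converges exactly when $\mu>\dfrac{n_{\max}+2}{K}$. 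This is where the hypothesis is actually used, and it sits inside $\mathcal{C}_1$, not inside $\mathcal{C}_2$ as you suggest. Only in $e_2$, where every remaining $U_\iota$ lies across a discontinuity from $x$, does the minimal relevant gap become $h^S_{\min}$, and the bound $F_\mu(h^S_{\min},\cdot,K-1,K)$ then follows from Lemma~\ref{lemmaboundBmu} and Proposition~\ref{proposition}, with $|p_\iota(f,x)|$ controlled through the Lebesgue constant.
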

\begin{remark}
    The constant 
    \begin{equation*}
        \mathcal{C}_1=\mathcal{M}_{f} \mathcal{M}_{h^{\max,d}_{X_n}} \left( 1+2\cdot2^{n_{\max}+1}+2\sum\limits_{\theta=2}^{\infty}   	\frac{(\theta+1)^{n_{\max}+1}}{(\theta-\frac{1}{2})^{K\mu}}  \right)
    \end{equation*}
    depends only on $f$ (and its derivatives up to order $n_{\max}+1$), $K$ and $\mu$.
    The constant 
    \begin{equation*}
         \mathcal{C}_2=2M \left\lVert f \right\rVert_{\infty} \left(1 + \max_{\ell=0,\dots,m}\max_{\substack{\iota=1,\dots,M \\ U_{\iota}\cap I_{\ell} = \emptyset}} \max\limits_{y\in[a,b]}\lambda_{\mathrm{card}\left(N_{\iota}\right)}(y) \right)
    \end{equation*}
    depends only on $f$, $M$ and the distribution of the interpolation nodes in the interval $[a,b]$.
\end{remark}
\begin{proof}
Let $x\in \Xi$, in particular, there exists $\Bar{\ell}\in \{0,\dots,m\}$ such that $x \in \left[\xi^{\Bar{\ell}}_1,\xi^{n_{\Bar{\ell}}}_K\right]$. We consider the minimal covering $\{A_{\theta}\}_{\theta=0}^{\Theta}$  
      of the interval $\left[\xi^{\Bar{\ell}}_1,\xi^{n_{\Bar{\ell}}}_K\right]$ by the following subsets
      \begin{equation*}
    A_0=\mathcal{R}_{h^{\max,d}_{X_n}}(x)
\end{equation*}
and
      \begin{equation*}
A_{\theta}=\mathcal{R}_{h^{\max,d}_{X_n}}({x}- 2 h^{\max,d}_{X_n}\theta)\cup \mathcal{R}_{h^{\max,d}_{X_n}}({x}+ 2 h^{\max,d}_{X_n} \theta), \quad \theta=1,\dots, \Theta.
\end{equation*} 
Clearly
\begin{equation*}
    \left[\xi^{\Bar{\ell}}_1,\xi^{n_{\Bar{\ell}}}_K\right] \subset \bigcup_{\theta=0}^{\Theta}A_{\theta}.
\end{equation*}
Since $A_{\theta}$ is composed of two congruent copies of $\mathcal{R}_{h^{\max,d}_{X_n}}({x})$ if $\theta=1,2,\dots,\Theta$, the number of subsets $C_{\Bar{\ell},j}$, $j=0,\dots,n_{\Bar{\ell}}$, with at least one vertex in $A_{\theta}$, $\theta=1,2,\dots,\Theta$, is bounded by $2 \mathcal{M}_{h^{\max,d}_{X_n}}$. 

Using~\eqref{sumdifffpol}, it follows 
\begin{eqnarray}\label{err1}
    e({x}) &\leq& \sum_{\iota=1}^M\left\lvert f({x})-p_{\iota}(f,x)\right\rvert B_{\mu ,\iota}\left(x\right) \notag \\ &=&  \sum_{\substack{\iota=1,\dots,M \\ U_{\iota}\cap I_{\Bar{\ell}} \neq \emptyset}}\left\lvert f({x})-p_{\iota}(f,x)\right\rvert B_{\mu ,\iota}\left(x\right) + 
      \sum_{\substack{\iota=1,\dots,M \\ U_{\iota}\cap I_{\Bar{\ell}} = \emptyset}}\left\lvert f({x})-p_{\iota}(f,x)\right\rvert B_{\mu ,\iota}\left(x\right) \\ \label{e1,2}
      &=:& e_1(x)+e_2(x).
\end{eqnarray}
We start by bounding the first part. 
We denote by $V_0$ the set of $C_{\iota}$ with at least one node in $A_0$. Analogously, for $\theta= 1,\dots,\Theta$, we further denote by $V_{\theta}$ the set of $C_{\iota}$ with at least one node in $A_{\theta}$ and no nodes in $A_{\theta-1}$. By construction, we have 
\begin{equation*}
    \bigcup_{\theta=0}^\Theta V_{\theta}=\bigcup_{\iota=1}^M C_{\iota} \quad \text{ and } \quad \bigcap_{\theta=0}^\Theta V_{\theta}=\emptyset.
\end{equation*} 

Let assume that
\begin{equation*}
x\in\bigcup\limits_{\substack{\iota=1,\dots,M \\ U_{\iota} \cap I_{\Bar{\ell}} \neq \emptyset}} C_{\iota} 
\end{equation*} 
then, by the Kronecker delta property of the multinode Shepard functions~\cite{DellAccio:2018:ROF}, from equation~\eqref{err1} and since $f \in C^{n_{\max}+1}(\Xi)$, we get
\begin{equation}\label{err1B1}
    e_1(x)\leq \left\lvert f({x})-p_{\iota} (f,x)\right\rvert\leq   \frac{\left\lvert\omega_{\iota}(x)\right\rvert}{\left(\mathrm{card}\left(N_{\iota}\right)+1\right)!}  \max\limits_{y \in I_{\bar{\ell}}} \left\lvert f^{\left(\mathrm{card}\left(N_{\iota}\right)+1\right)} (y) \right\rvert.
\end{equation}

Now, we assume that
\begin{equation*}
    {x}\notin \bigcup\limits_{\substack{\iota=1,\dots,M \\ U_{\iota} \cap I_{\bar{\ell}} \neq \emptyset}} C_{\iota}
\end{equation*}
 and let $\iota_{\min}\in \{1,\dots, M\}$ such that 
\begin{equation*}
\prod\limits_{\kappa=1}^{K}\left\lvert {x}-{\xi}^{\iota_{\min}}_{\kappa}\right\rvert= \underset{\iota=1,\dots,M}{\min} 	\prod\limits_{\kappa=1}^{K}\left\vert {x}-{\xi}^{\iota}_{\kappa}\right\vert.   
\end{equation*}
Observe that, if $C_{\iota} \in V_0$, we have 
\begin{equation}\label{ineqinR0}
\prod\limits_{\kappa=1}^{K}\left\lvert{x}-{\xi}^{\iota}_{\kappa}\right\rvert\leq  \left(2h^{\max,d}_{X_n}\right)^{K},    
\end{equation}
while if $C_{\iota} \in V_{\theta}$, $\theta=1,\dots, \Theta$, then
\begin{equation}\label{ineqinRthe}
((2\theta-1) h^{\max,d}_{X_n})^{K} \leq \prod\limits_{\kappa=1}^{K}\left\vert {x}-{\xi}^{\iota}_{\kappa}\right\vert
 \leq \left((2\theta+2) h^{\max,d}_{X_n} \right)^{K}.    
\end{equation}
Finally, we get
\begin{equation*}
		B_{\mu ,\iota}(x)=\dfrac{\prod\limits_{\kappa=1}^{K}\left\vert 
{x}-{\xi}^{\iota}_{\kappa}\right\vert ^{-\mu }}{\sum\limits_{\tau=1}^{M}%
\prod\limits_{\kappa=1}^{K}\left\vert {x}-{\xi}^{\tau}_{\kappa}\right\vert
^{-\mu}}\leq \min \left\{  1,\prod\limits_{\kappa=1}^{K}\frac{\left\lvert{x}-{\xi}^{\iota}_{\kappa}\right\rvert^{-\mu}}{\left\lvert {x}-{\xi}^{\iota_{\min}}_{\kappa}\right\rvert^{-\mu}}\right\} 
\leq \left\{ \begin{array}{ll}
			1, & C_{\iota} \in V_{\theta}, \, \theta=0,1,\\
			\frac{1}{(\theta-\frac{1}{2})^{K\mu}},& C_{\iota} \in V_{\theta}, \theta \geq 2.
		\end{array}\right. 
\end{equation*} 
We underline the fact that, since $C_{\iota} \subset U_{\iota}$ the inequalities~\eqref{ineqinR0} and~\eqref{ineqinRthe} hold also for any chosen set $$N_{\iota} \subset U_{\iota},$$ for $ \iota=1,\dots,M$. 
By~\eqref{err1},~\eqref{err1B1} and
after easy computations, it results
\begin{eqnarray*}
    	e_1(x) & \leq& \mathcal{M}_{f} \left( \sum\limits_{C_{\iota} \in V_{0}\cup V_{1}}\frac{\left\lvert \omega_{\iota}(x)\right\rvert}{\left(\mathrm{card}\left(N_{\iota}\right)+1\right)!}+\sum\limits_{\theta=2}^{\Theta} \sum\limits_{C_{\iota} \in V_{\theta}}\frac{\left\lvert \omega_{\iota}(x)\right\rvert}{\left(\mathrm{card}\left(N_{\iota}\right)+1\right)!} 	\frac{1}{(\theta-\frac{1}{2})^{K\mu}}  \right) \\
     & \leq& \mathcal{M}_{f} \left( \sum\limits_{C_{\iota} \in V_{0}}\frac{ (2h^{\max,d}_{X_n})^{\mathrm{card}\left(N_{\iota}\right)+1}}{\left(\mathrm{card}\left(N_{\iota}\right)+1\right)!}+\sum\limits_{C_{\iota} \in V_{1}}\frac{ (4h^{\max,d}_{X_n})^{\mathrm{card}\left(N_{\iota}\right)+1}}{\left(\mathrm{card}\left(N_{\iota}\right)+1\right)!}
     \right.
     \\
     &&+ \left.\sum\limits_{\theta=2}^{\Theta} \sum\limits_{C_{\iota} \in V_{\theta}}\frac{\left((2\theta+2) h^{\max,d}_{X_n} \right)^{\mathrm{card}\left(N_{\iota}\right)+1}}{\left(\mathrm{card}\left(N_{\iota}\right)+1\right)!} 	\frac{1}{(\theta-\frac{1}{2})^{K\mu}} \right) \\
      & \leq& \mathcal{M}_{f} \mathcal{M}_{h^{\max,d}_{X_n}} \max\limits_{\iota=1,\dots,M}\frac{ (2h^{\max,d}_{X_n})^{\mathrm{card}\left(N_{\iota}\right)+1}}{\left(\mathrm{card}\left(N_{\iota}\right)+1\right)!} \left( 1+2\cdot2^{n_{\max}+1}+\sum\limits_{\theta=2}^{\Theta} 2 (\theta+1)^{n_{\max}+1} 	\frac{1}{(\theta-\frac{1}{2})^{K\mu}}  \right) \\
      &\leq&\mathcal{M}_{f} \mathcal{M}_{h^{\max,d}_{X_n}} \max\limits_{\iota=1,\dots,M}\frac{ (2h^{\max,d}_{X_n})^{\mathrm{card}\left(N_{\iota}\right)+1}}{\left(\mathrm{card}\left(N_{\iota}\right)+1\right)!} \left( 1+2\cdot2^{n_{\max}+1}+2\sum\limits_{\theta=2}^{\Theta}   	\frac{(\theta+1)^{n_{\max}+1}}{(\theta-\frac{1}{2})^{K\mu}}  \right) \\
       &\leq&\mathcal{M}_{f} \mathcal{M}_{h^{\max,d}_{X_n}} \max\limits_{\iota=1,\dots,M}\frac{ (2h^{\max,d}_{X_n})^{\mathrm{card}\left(N_{\iota}\right)+1}}{\left(\mathrm{card}\left(N_{\iota}\right)+1\right)!} \left( 1+2\cdot2^{n_{\max}+1}+2\sum\limits_{\theta=2}^{\infty}   	\frac{(\theta+1)^{n_{\max}+1}}{(\theta-\frac{1}{2})^{K\mu}}  \right).
     \end{eqnarray*}
     The series $\sum\limits_{\theta=2}^{\infty} \frac{(\theta+1)^{n_{\max}+1}}{(\theta-\frac{1}{2})^{K\mu}}$ converges for $\mu>\frac{n_{\max}+2}{K}$.
It remains to bound $e_2(x)$ defined in~\eqref{e1,2}. Since $x \in \left[\xi^{\Bar{\ell}}_1,\xi^{n_{\Bar{\ell}}}_K\right]$ we can assume that there exist $i=\Bar{\ell},\dots,n_{\Bar{\ell}}$ and $k=1,\dots,K-1$ such that $x \in \left[\xi^i_k, \xi^i_{k+1}\right]$.
 In order to bound the multinode Shepard functions $B_{\mu,\iota}$, we use two times Lemma~\ref{lemmaboundBmu} and Proposition~\ref{proposition}. More precisely, we replace
 \begin{eqnarray*}
 \alpha\rightarrow a_{\iota-1}, \quad \beta \rightarrow b_{\iota-1}, \quad \gamma \rightarrow a_{\iota}, \quad \delta \rightarrow b_{\iota},
\end{eqnarray*}
for $\iota=1,\dots,M$ such that $U_{\iota}\cap I_{\Bar{\ell}} =\emptyset$ and $\xi^{\iota}_K < \xi^i_k$ and we replace  
\begin{eqnarray*}
 \alpha \rightarrow a_{\iota}, \quad \beta \rightarrow b_{\iota}, \quad \gamma \rightarrow a_{\iota+1}, \quad \delta \rightarrow b_{\iota+1},
\end{eqnarray*}
for $\iota=1,\dots,M$ such that $U_{\iota}\cap I_{\Bar{\ell}} =\emptyset$ and $\xi^{\iota}_1 > \xi^i_{k+1}$. By using the triangular inequality, we have 

\begin{eqnarray*}
    e_2(x) &=& \sum_{\substack{\iota=1,\dots,M \\ U_{\iota}\cap I_{\Bar{\ell}} = \emptyset}}\left\lvert f({x})-p_{\iota}(f,x)\right\rvert B_{\mu ,\iota}\left(x\right) \\ 
    &\leq& \sum_{\substack{\iota=1,\dots,M \\ U_{\iota}\cap I_{\Bar{\ell}} = \emptyset}} \left
    (\left\lvert f({x})\right\rvert + \left\lvert p_{\iota}(f,x)\right\rvert \right)
    B_{\mu ,\iota}\left(x\right) \\
    &\leq& \left
    (\left\lvert f({x})\right\rvert + \max_{\substack{\iota=1,\dots,M \\ U_{\iota}\cap I_{\Bar{\ell}} = \emptyset}}\left\lvert p_{\iota}(f,x)\right\rvert \right) \sum_{\substack{\iota=1,\dots,M \\ U_{\iota}\cap I_{\Bar{\ell}} = \emptyset}}  B_{\mu ,\iota}(x) \\
    &\leq& \left
    (\left\lVert f \right\rVert_{\infty} + \max_{\substack{\iota=1,\dots,M \\ U_{\iota}\cap I_{\Bar{\ell}} = \emptyset}}\left\lVert p_{\iota}(f,\cdot)\right\rVert_{\infty} \right) \times\\
    && \left( c_1 \cdot F_{\mu}\left(a_{\iota}-b_{\iota-1},h^{\max,d}_{X_n},K-k,K\right)  + c_2 \cdot F_{\mu}\left(a_{\iota+1}-b_{\iota},h^{\max,d}_{X_n},k,K\right)\right) \\
    &\leq& 
    M \left\lVert f \right\rVert_{\infty} \left(1 + \max_{\substack{\iota=1,\dots,M \\ U_{\iota}\cap I_{\Bar{\ell}} = \emptyset}} \max\limits_{y\in[a,b]}\lambda_{\mathrm{card}\left(N_{\iota}\right)}(y) \right) \times 
    \\
    &&
\left(F_{\mu}\left(h^S_{\min},h^{\max,d}_{X_n},1,K\right)+F_{\mu}\left(h^S_{\min},h^{\max,d}_{X_n},K-1,K\right)\right)\\
    &\leq& 
    2M \left\lVert f \right\rVert_{\infty} \left(1 + \max_{\ell=0,\dots,m}\max_{\substack{\iota=1,\dots,M \\ U_{\iota}\cap I_{\ell} = \emptyset}} \max\limits_{y\in[a,b]}\lambda_{\mathrm{card}\left(N_{\iota}\right)}(y) \right)  F_{\mu}\left(h^S_{\min},h^{\max,d}_{X_n},K-1,K\right), 
\end{eqnarray*}
where 
$$c_1=\mathrm{card}\left(\left\{\iota=1,\dots,M : U_{\iota}\cap I_{\Bar{\ell}} =\emptyset \wedge \xi^{\iota}_K < \xi^i_k  \right\}\right),$$

$$c_2=\mathrm{card}\left(\left\{\iota=1,\dots,M : U_{\iota}\cap I_{\Bar{\ell}} =\emptyset \wedge \xi^{\iota}_1 > \xi^i_{k+1}  \right\}\right)$$
and $\lambda_{\mathrm{card}\left(N_{\iota}\right)}(y)$ is the Lebesgue function for Lagrange interpolation in $N_{\iota}$ and $\max\limits_{y\in[a,b]}\lambda_{\mathrm{card}\left(N_{\iota}\right)}(y)$ is the relative Lebesgue constant. 
\end{proof}
\begin{remark}\label{remarknoise}
    We notice that, for any $\iota=1,\dots,M$,  the local polynomial interpolant on $N_{\iota}$, used in the definition of the quasi-interpolation operator~\eqref{Qoperator}, can be substituted with a least squares polynomial on the same nodes. This alternative strategy allows us to better reconstruct functions whose samplings are affected by noise.
\end{remark}

\section{Numerical experiments}
\label{sec4}
In this section, we perform a series of numerical experiments to confirm the order of approximation theoretically proved in the previous section and the absence of Gibbs-like oscillations. 
Moreover, we show how the approximations worsen as we get closer and cross the singularity  $s_i \in \left(x_{\alpha_i}, x_{\alpha_{i}+1}\right)$.
Finally, we prove that the algorithm is robust to the presence of perturbations in the data by presenting experiments with perturbed data functions using additive white Gaussian noise.   

In the following numerical experiments, we consider the test functions $f_1$, $f_2$~\cite{Amat:2022:ACO}, $f_3$~\cite{Arandiga:2024:EAW} and $f_4$~\cite{Costarelli:2017:ADS}, scaled in $[-1,1]$ and defined as follows 
\begin{equation}
    f_1(x)=\begin{cases}
        \sin\left(\frac{17}{8} \pi x\right) & x \leq 0, \\
        \frac{1}{2}\sin\left(\frac{17}{8} \pi x\right) + 10 & x>0,     
    \end{cases}
    \quad
    f_2(x)=\begin{cases}
        \frac{1}{2} x^5-x^2 & x \leq 0, \\
        x^6-x^4+x^2-2 & x>0,     
    \end{cases} 
    \end{equation}
    \begin{equation}
    f_3(x)=\begin{cases}
        e^{\frac{1}{2}(x+1)} &  x \leq 0, \\
        1+e^{\frac{1}{4}(x+1)^2} & x>0,  
    \end{cases}
    \quad
    f_4(x)=\begin{cases}
        \frac{5}{\left(\frac{x}{4}\right)^2+1} & \lvert x \rvert \geq \frac{1}{2}, \\
        \frac{3}{2} & -\frac{1}{2} < x < 0, \\
        \frac{1}{4} & 0 \leq x < \frac{1}{2}.
    \end{cases}
\end{equation}
\subsection{Numerical experiment $1$}\label{subsection1}
In the first class of numerical experiments, we assume to know the evaluations of the functions $f_i$, $i=1,2,3,4$, on the grid of $n+1=1025$ equispaced nodes in $[-1,1]$. For $d=3$, we compute the maximum approximation error  
\begin{equation}
    e_{\max}\left[f_i\right]=\max_{x\in X_{n_e}} \left\lvert f_i(x) - Q_{4}\left[f_i\right](x) \right\rvert, \quad i=1,2,3,4,
\end{equation}
for different values of $n_e$, where
\begin{equation}
    X_{n_e}=\left\{x_{i,e}=-1 + \frac{2}{n_e}i \, : \, i=0,\dots,n_e \right\}
\end{equation}
is the set of evaluation points. The results are shown in Table~\ref{tab1}.
\begin{table}
    \centering
    \begin{tabular}{|c|c|c|c|c|c|}\hline
        & $n_e=500$ & $ n_e=1000$ & $n_e=2000$ & $n_e=3000$ & $n_e=4000$\\ \hline
        $e_{\max}\left[f_1\right]$ & 1.1927e-09 	 &  2.1855e-07 	 & 2.8675e-03 	 & 6.0958e-02 & 2.7313e-01  \\
        $e_{\max}\left[f_2\right]$ & 1.8812e-10 	 &  4.3654e-08 	 &  5.7340e-04 	 & 1.2190e-02 &   5.4621e-02 \\
        $e_{\max}\left[f_3\right]$ & 7.2635e-12 	 & 1.3871e-08 	 & 1.8217e-04 	 & 3.8727e-03 & 1.7352e-02  \\
        $e_{\max}\left[f_4\right]$ & 6.0549e-09 & 9.1175e-05 &  9.3101e-03	 &  4.1357e-02 & 8.3835e-02\\\hline
    \end{tabular}
    \caption{Maximum approximation error $e_{\max}\left[f_i\right]$ for the functions $f_i$, $i=1,2,3,4$, computed using $n=1025$ nodes and by varying the set of evaluation points for different values of $n_e$.}
    \label{tab1}
\end{table}
From these results, we notice that by increasing the number of evaluation points $n_e$, we go across the singularity and then the maximum approximation error increases. This is a natural behavior, since we have no information on the function close to the singularity. In order to appreciate the quality of the approximation, in Figures~\ref{Qf1_rec},~\ref{Qf2_rec},~\ref{Qf3_rec},~\ref{Qf4_rec}, we plot the reconstruction of the functions through the quasi-interpolation operator~\eqref{Qoperator}.
More precisely, on the left we plot the functions $f_i$, $i=1,2,3,4$ and their quasi-interpolant, while on the right we zoom in on these plots to highlight the behavior of the approximant near the singularity.   
\begin{figure}
    \centering
 \includegraphics[width=0.5\linewidth]{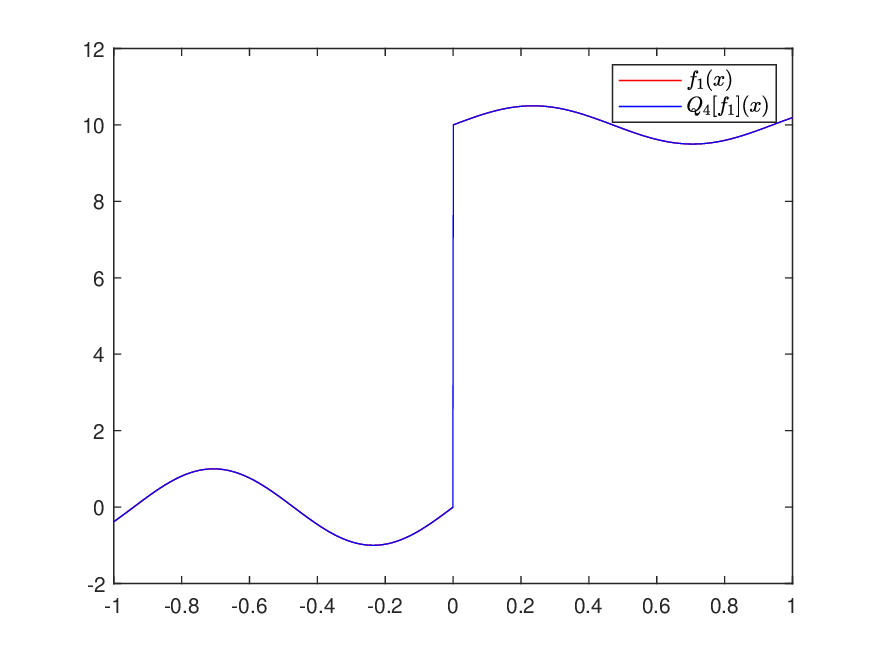}{}\includegraphics[width=0.5\linewidth]{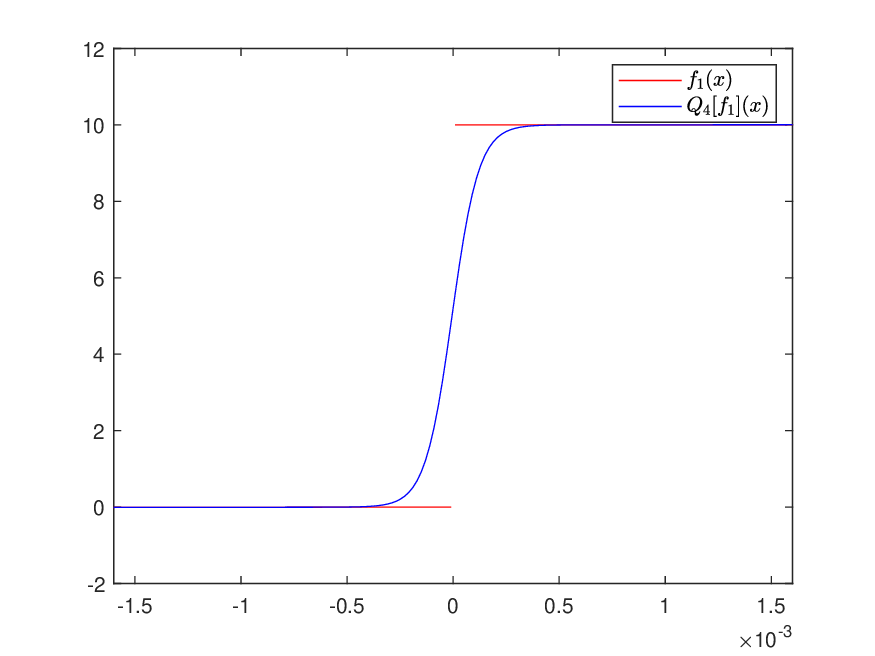}
    \caption{In the left, plot of the function $f_1$ (red) and its quasi-interpolant $Q_{4}[f_1](x)$ (blue) computed for $\mu=4$ and related to a grid of $n+1=1025$ equispaced nodes is displayed. In the right plot, to better appreciate the quality and the differentiability class of the approximation, we zoomed in to highlight the behavior of the approximant near the singularity $x=0$.}
    \label{Qf1_rec}
\end{figure}
\begin{figure}
    \centering
 \includegraphics[width=0.5\linewidth]{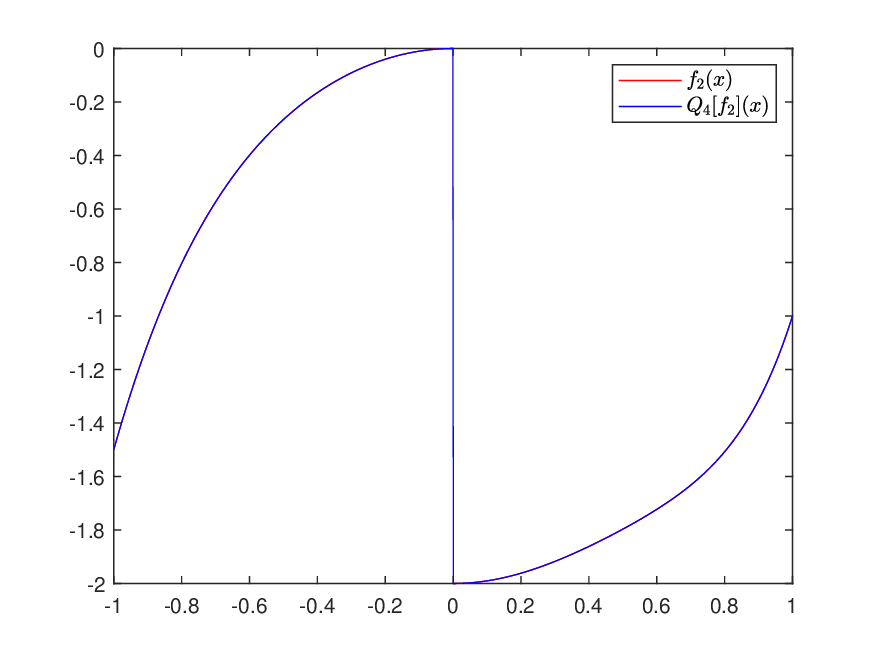}{}\includegraphics[width=0.5\linewidth]{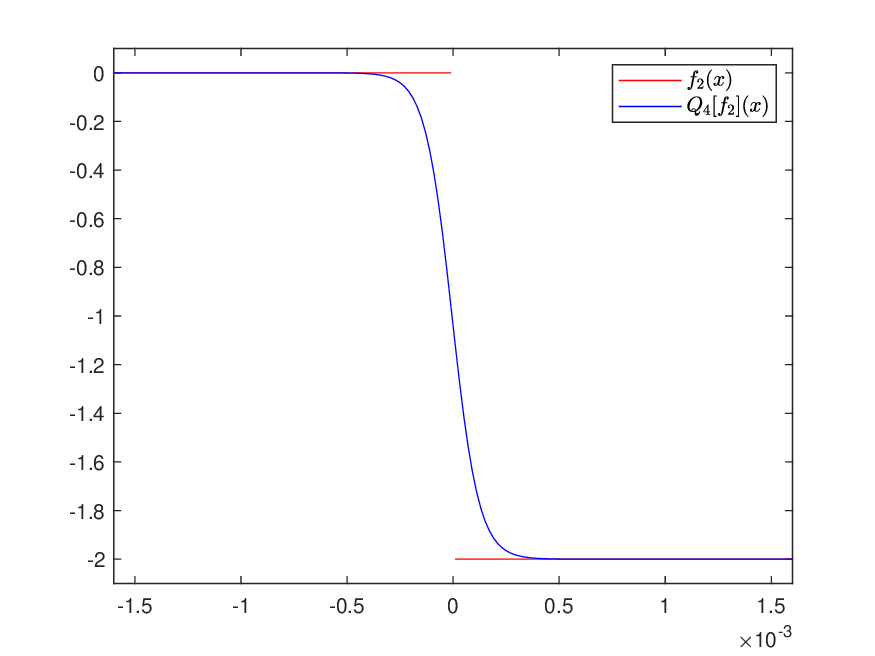}
    \caption{In the left, plot of the function $f_2$ (red) and its quasi-interpolant $Q_{4}[f_2](x)$ (blue) computed for $\mu=4$ and related to a grid of $n+1=1025$ equispaced nodes is displayed. In the right plot, to better appreciate the quality and the differentiability class of the approximation, we zoomed in to highlight the behavior of the approximant near the singularity $x=0$.}
    \label{Qf2_rec}
\end{figure}

\begin{figure}
    \centering
 \includegraphics[width=0.5\linewidth]{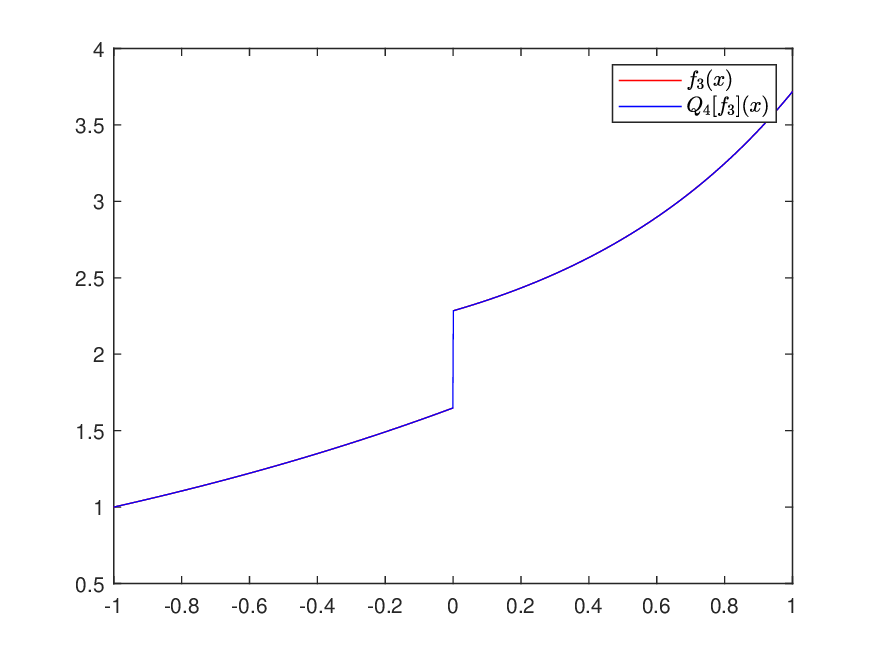}{}\includegraphics[width=0.5\linewidth]{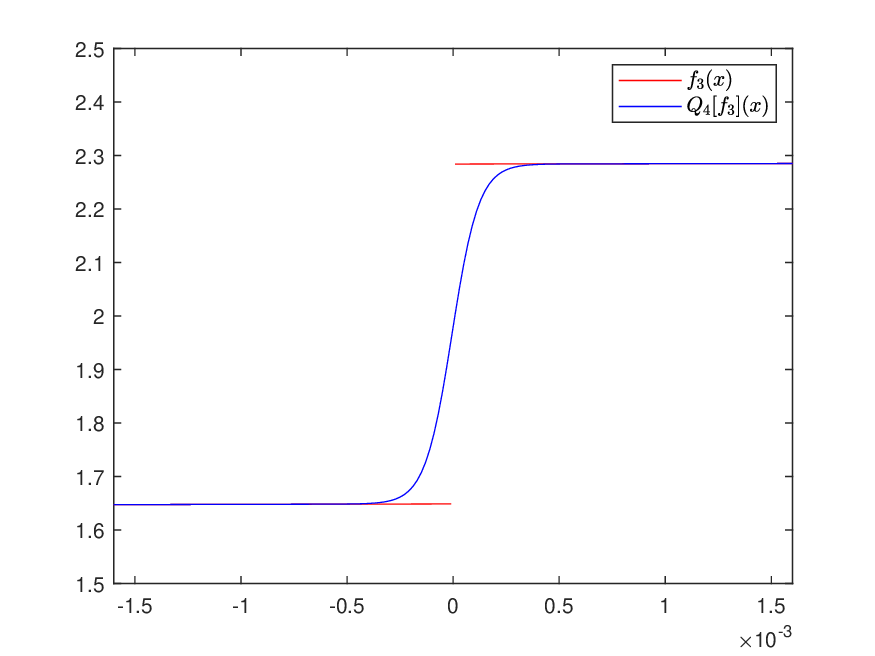}
    \caption{In the left, plot of the function $f_3$ (red) and its quasi-interpolant $Q_{4}[f_3](x)$ (blue) computed for $\mu=4$ and related to a grid of $n+1=1025$ equispaced nodes id displayed. In the right plot, to better appreciate the quality and the differentiability class of the approximation, we zoomed in to highlight the behavior of the approximant near the singularity $x=0$.}
    \label{Qf3_rec}
\end{figure}

\begin{figure}
    \centering
 \includegraphics[width=0.5\linewidth]{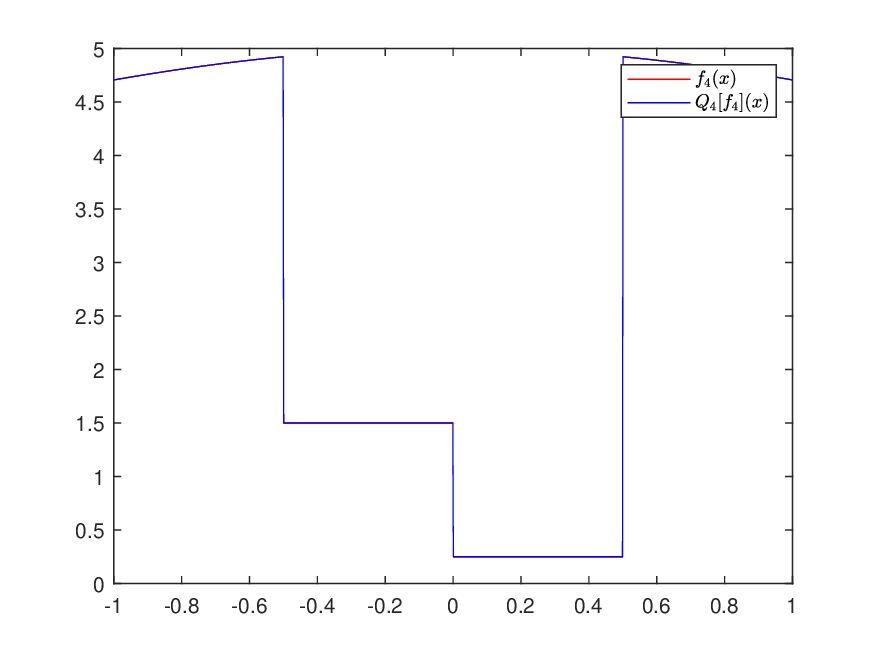}{}\includegraphics[width=0.5\linewidth]{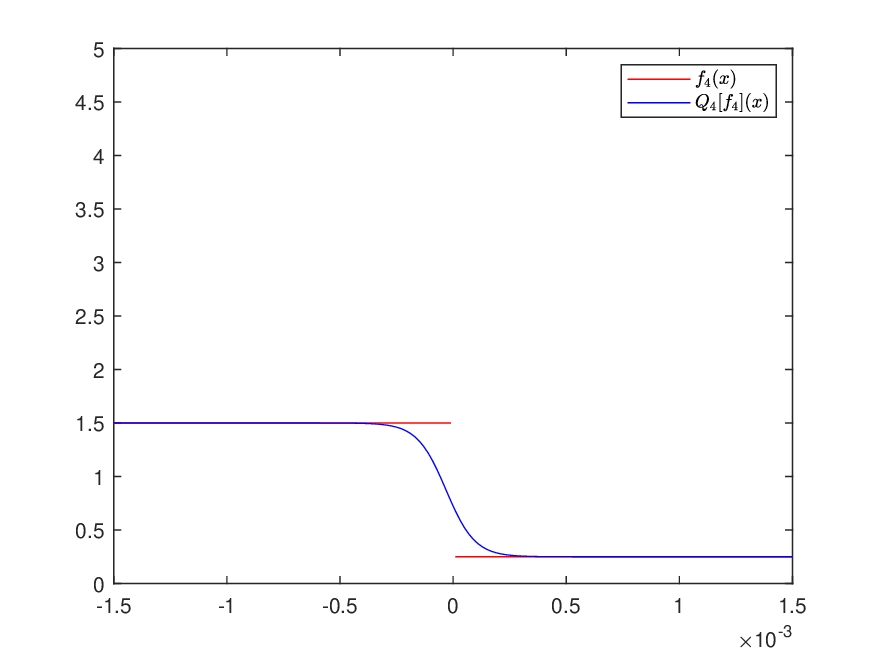}
    \caption{In the left, plot of the function $f_4$ (red) and its quasi-interpolant $Q_{4}[f_4](x)$ (blue) computed for $\mu=4$ and related to a grid of $n+1=1025$ equispaced nodes is displayed. In the right plot, to better appreciate the quality and the differentiability class of the approximation, we zoomed in to highlight the behavior of the approximant near the singularity $x=0$.}
    \label{Qf4_rec}
\end{figure}
\subsection{Numerical experiment $2$}
In the second class of numerical experiments, we assume to know the evaluations of the functions $f_i$, $i=1,2,3,4$, for different values of equispaced nodes ranging from $100$ to $1500$, with stepsize $200$, in $[-1,1]$. For different values of $d$, we analyze the trend of the error in $L_1$-norm
\begin{equation}
    e_1[f_i]= \int_{-1}^{1} \left\lvert f_i(x)-Q_4\left[f_i\right](x)\right\rvert \mathrm{d}x, \quad i=1,2,3,4,
\end{equation}
that is particularly well-suited for analyzing the error of a function with jump discontinuities. This is because it evaluates the total error across the whole domain, 
by offering a complete measure of deviation that takes 
effective account for sudden changes in the function while avoiding over-sensitivity to localized peaks. The results of these experiments are presented in Figures~\ref{L1f1f2},~\ref{L1f3f4}. In order to compute the $L_1$-norm we have used a Gaussian quadrature formula based on $20$ quadrature points and weights. 
We observe that, the trend of the errors $e_1\left[f_i\right]$, $i=1,2,3,4$, decrease as the number of equispaced nodes increases. Moreover, for each fixed number of nodes, the error also decreases as the value $d$ increases.

\begin{figure}
    \centering
    \includegraphics[width=0.49\linewidth]{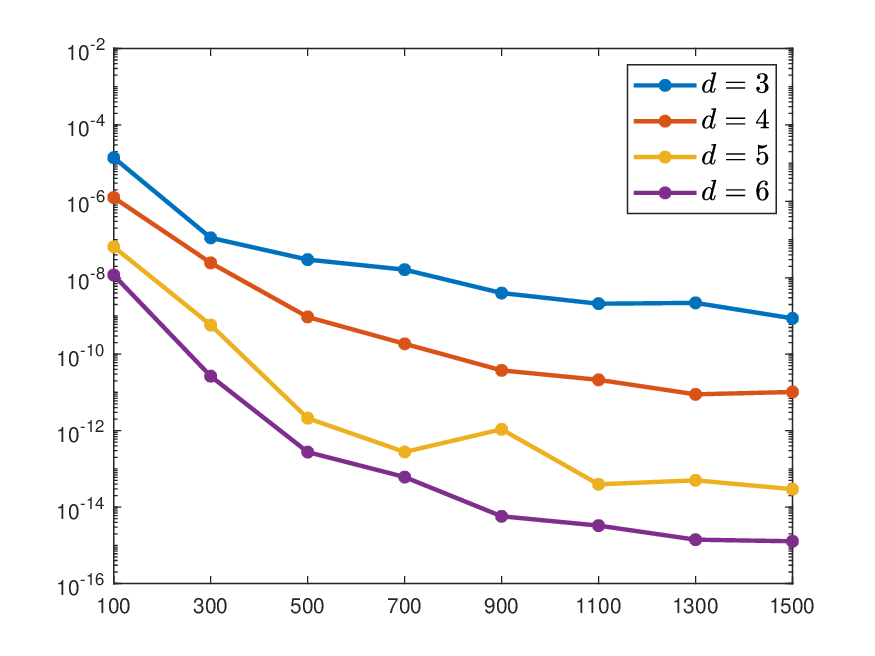}
    \includegraphics[width=0.49\linewidth]{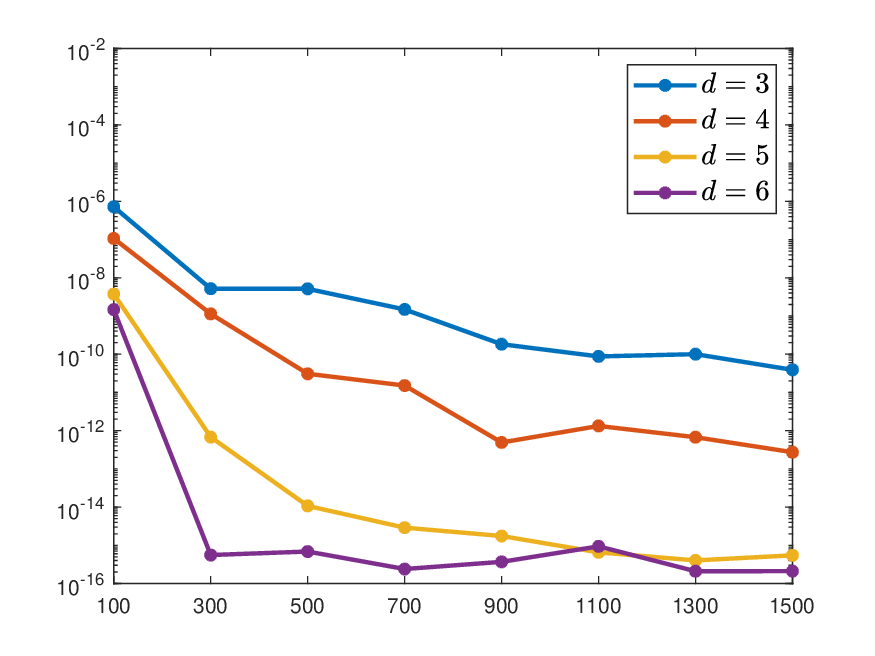}
    \caption{Semilog plot of the trend of the error in $L_1$-norm for the functions $f_1$ (left) and $f_2$, (right), produced by the quasi-interpolation operator~\eqref{Qoperator} with $\mu=4$, different values of $d$ and the number of equispaced nodes ranging from $100:200:1500$.}
    \label{L1f1f2}
\end{figure}

\begin{figure}
    \centering
    \includegraphics[width=0.49\linewidth]{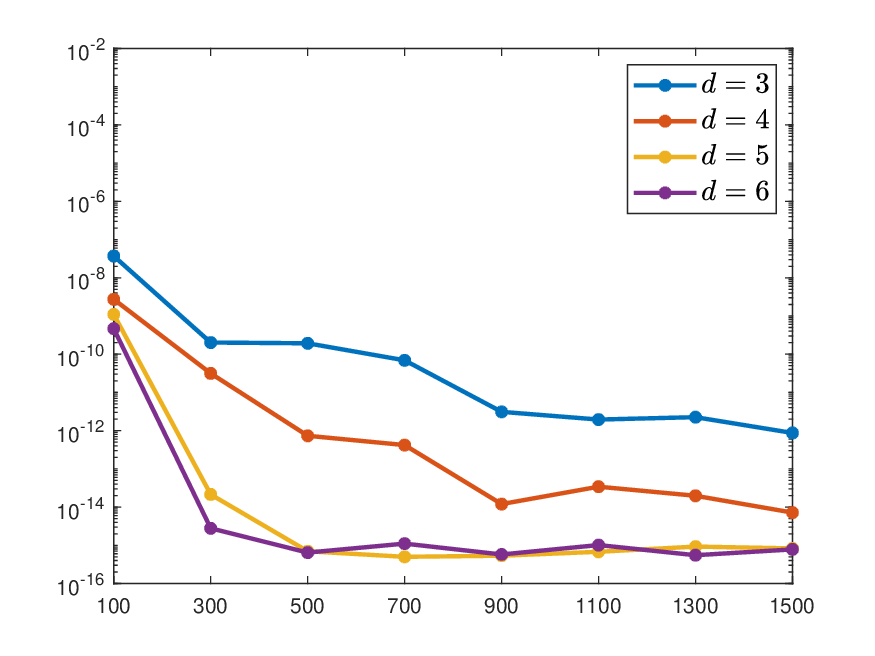}
    \includegraphics[width=0.49\linewidth]{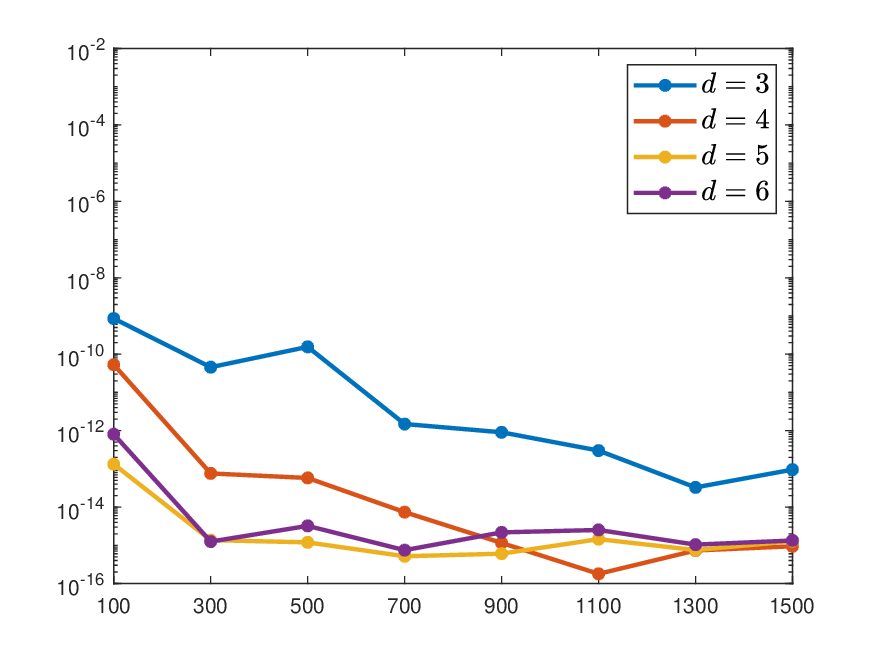}
    \caption{Semilog plot of the trend of the error in $L_1$-norm for the functions $f_3$ (left) and $f_4$, (right), produced by the quasi-interpolation operator~\eqref{Qoperator} with $\mu=4$, different values of $d$ and the number of equispaced nodes ranging from $100:200:1500$.}
    \label{L1f3f4}
\end{figure}
\subsection{Numerical experiment $3$}
In the third class of numerical experiments, we test the accuracy of our quasi-interpolation operator when the data are affected by additive white Gaussian noise. In this case,  by Remark~\ref{remarknoise}, we use the multinode Shepard functions blended with least squares polynomials of degree $d^{\prime}$. More precisely, we set $d^{\prime}=3$, $n+1=1025$, $d=6$ and we assume that the amplitude of the noise is $5\times10^{-1}$. In line with Subsection~\ref{subsection1}, we plot the reconstruction of the function $f_1$ as displayed in Figure~\ref{f1agwn}. From this plot, we observe that the oscillations produced by the quasi-interpolant do not reach the amplitude of the noise.   
\begin{figure}
    \centering
    \includegraphics[width=0.5\linewidth]{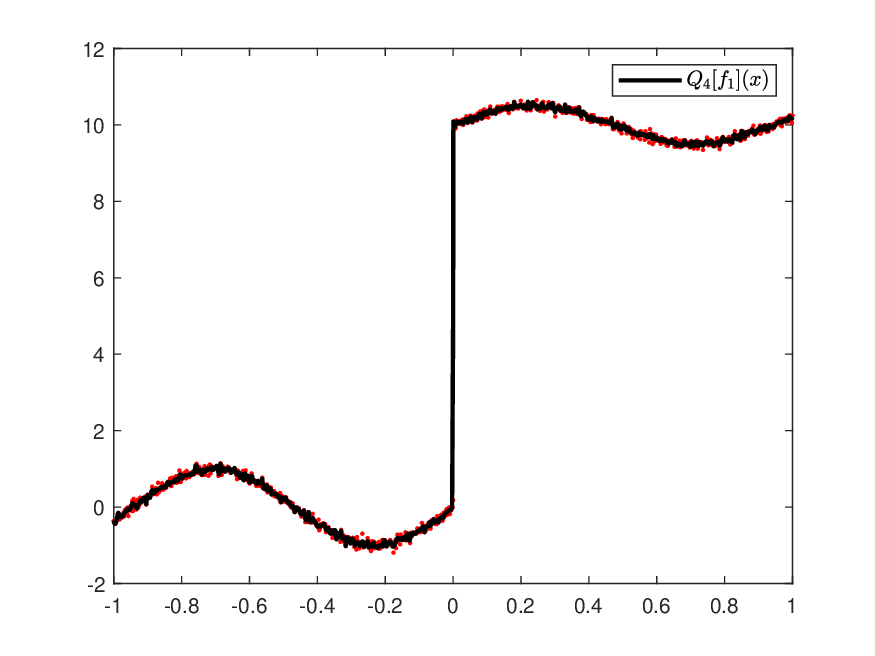}
    \caption{Plot of the quasi-interpolation operator $Q_4\left[f_1\right](x)$  (black lines) related to $n+1=1025$ equispaced nodes and the discretization of the associated sampled points (red points).}
    \label{f1agwn}
\end{figure}

\subsection{Numerical experiment $4$}
In this final set of numerical experiments, we consider the continuous functions 
\begin{equation}
    f_5(x) = \frac{1}{1+25x^2}, \quad f_6(x) = \cos(20\pi x), \quad x\in[-1,1].  
\end{equation}
We compute the trend of the maximum approximation error produced by the quasi-interpolation operator~\eqref{Qoperator}, for different values of $d$, while varying the stepsize of the grid of equispaced nodes. The results are reported in Figure~\ref{maxf5f6}. From these plots, we observe that the maximum approximation error decreases as the number of equispaced nodes increases. Moreover, the error becomes smaller as the degree $d$ increases. This behavior is consistent with the results obtained for discontinuous functions, confirming that the method performs well for both discontinuous and smooth functions.
\begin{figure}
    \centering
\includegraphics[width=0.49\linewidth]{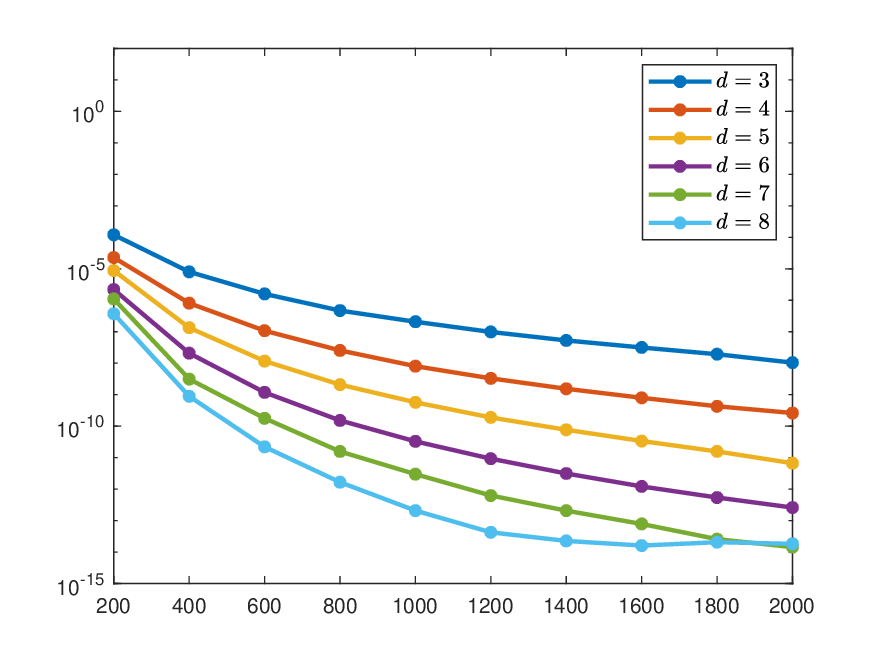}\includegraphics[width=0.49\linewidth]{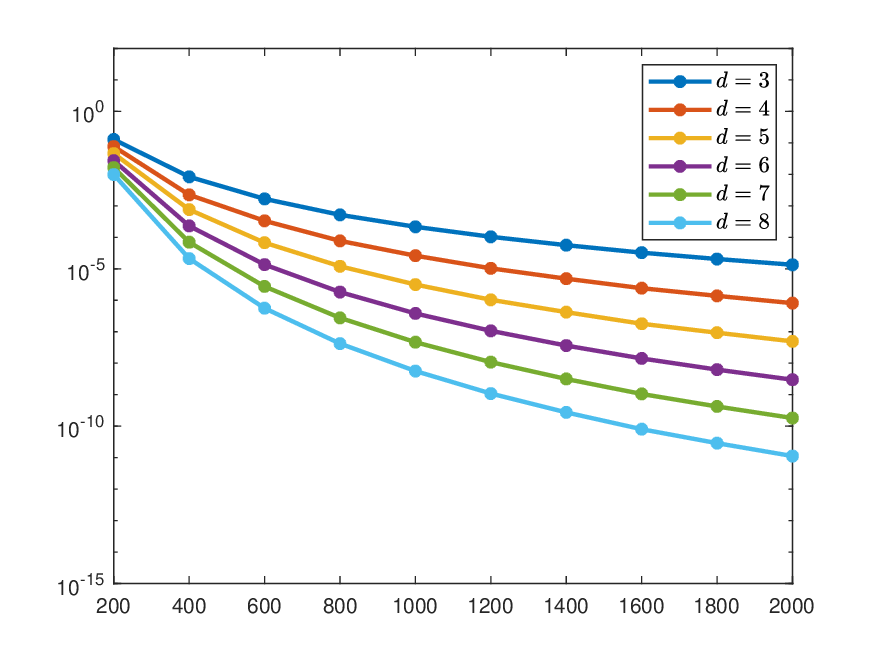}
    \caption{Semilog plots of the trend of the maximum approximation error for the functions $f_5$ (left) and $f_6$ (right), produced by the quasi-interpolation operator~\eqref{Qoperator} with $\mu=4$, for different values of $d$ and number of equispaced nodes ranging from $200$ to $2000$ with step $200$.}
    \label{maxf5f6}
\end{figure}

\section*{Acknowledgments}
The authors are grateful to the anonymous reviewers for carefully reading the manuscript and for their precise and helpful suggestions which allowed to improve the work.

This research has been achieved as part of RITA \textquotedblleft Research
 ITalian network on Approximation'' and as part of the UMI group \enquote{Teoria dell'Approssimazione
 e Applicazioni}. The research was supported by GNCS-INdAM 2025 project \lq\lq Polinomi, Splines e Funzioni Kernel: dall’Approssimazione Numerica al Software Open-Source\rq\rq.  The authors are members of the INdAM-GNCS Research group. The work of F. Nudo has been funded by the European Union – NextGenerationEU under the Italian National Recovery and Resilience Plan (PNRR), Mission 4, Component 2, Investment 1.2 \lq\lq Finanziamento di progetti presentati da giovani ricercatori\rq\rq,\ pursuant to MUR Decree No. 47/2025.

\bibliographystyle{elsarticle-num}
\bibliography{bibliography.bib}

\end{document}